\documentclass[12pt,final,reqno]{amsart}
\usepackage{ntung}
\usepackage{tikz,verbatim,xcolor,graphicx}
\usepackage{amsthm}
\usepackage{thmtools}
\usepackage{thm-restate}

\usepackage{ntung_commands}

\DeclareMathOperator{\ark}{AR}

\DeclareMathOperator{\subspan}{span}

\newcommand{\F}{\mathbb{F}}

\begin{document}

\title{Randomly piercing algebraic sets}
\author[A1]{Daniel Altman}
\address{Department of Mathematics, Stanford University, CA 94305, USA}
\email{daniel.h.altman@gmail.com}
\author[A2]{Nathan Tung}
\address{Department of Statistics, Stanford University, CA 94305, USA}
\email{ntung@stanford.edu}
\begin{abstract}
We show, for example, that if one samples
\[\frac{\log p}{2\log(1+(p-1)^{-1})} \cdot n^2(1 + o_{n\to \infty}(1))\]
points in $\F_p^n$ at random then asymptotically almost surely this set intersects every quadratic hypersurface. We furthermore show that this is tight in that sampling $o_{n\to\infty}(n^2)$ fewer points almost surely fails to intersect some quadratic hypersurface.

Our main result is a sharp threshold  for the following problem: how many points in $\F_p^n$ does one need to randomly sample to almost surely intersect every algebraic set defined by at most $s$ polynomials each of degree at most $k$? As an application we improve lower bounds in the random Szemer\'{e}di theorem in $\F_p^n$, in particular obtaining a leading constant which grows as the threshold for what is considered a `dense' set in Szemer\'{e}di's theorem shrinks.
\end{abstract}
\maketitle

\section{Introduction}\label{sec:intro}

For an odd prime $p$ and collection of polynomials $f_1,\dots,f_s \in \FF_p[x_1,\dots,x_n]$ let 
$$V(f_1,\dots,f_s) = \set{x \in \FF_p^n: f_1(x) = \dots = f_s(x) = 0}
$$ 
be the variety\footnote{In this paper we use the term ``variety'' rather liberally to simply mean the set of solutions to a system of polynomial equations over a given (finite) field. In particular our use of this term does not imply irreducibility, nor does it imply that the space of solutions is to be taken in the algebraic closure.} determined by the vanishing of $f_1,\dots,f_s$. Let $\cF_{k,s}$ be the collection of all such varieties defined by $s$ polynomials $f_i$ of degree at most $k$. In this paper we determine in Theorem \ref{thm:mainthresh} how many random points in $\FF_p^n$ one needs to sample to intersect every element of this collection. 

We say a set $M$ pierces a collection of sets $\cF$ if $M \cap F \neq \emptyset$ for every $F \in \cF$. Thus, put another way, we determine a threshold for the size of a random set that pierces $\cF_{k,s}$ with high probability. Throughout this paper we deal with the regime where $p$ is fixed and $n$ is large. All asymptotic notation can be interpreted relative to the limit $n\to \infty$ unless subscripts denote otherwise. In particular, our threshold result is with respect to this limit. 

Before stating the main theorem in generality, we will briefly discuss the simplest nonlinear case ($k=2,s=1$). For further simplicity, we will restrict this discussion to homogeneous polynomials. Here,
\begin{equation}\label{eq:quadhyp}
    \widetilde{\cF}_{2,1} = \set{V(f): f(x) = x^\top A x, A \in \text{SymMat}_{n\times n}(\FF_p)}.
\end{equation}
How many random points in $\F_p^n$ are needed to almost surely pierce $\widetilde{\cF}_{2,1}$, the family of homogeneous quadratic hypersurfaces? Let's begin with what one \textit{expects}.

For fixed nonzero $x \in \FF_p^n$, the value $x^\top A x$ distributes uniformly over $\FF_p$ as $A$ ranges over the set $\text{SymMat}_{n\times n}(\FF_p)$. On average then ${\abs{V(f)}}/p^n = 1/p$,
and if $X_{\abs{M}}$ denotes the number of hypersurfaces in $\wt \cF_{2,1}$ disjoint from a random set $M \subseteq \FF_p^n$, then perhaps one expects
$$
\EE X_{\abs{M}} \approx \abs{\widetilde{\cF}_{2,1}} (1-p^{-1})^{\abs{M}} \approx p^{n+1 \choose 2} (1-p^{-1})^{\abs{M}},
$$
which yields a threshold for this first moment being of constant order around
$$
\abs{M} \approx \frac{\log p}{\log\bigp{1+(p-1)^{-1}}}{n \choose 2}.
$$
Indeed, this turns out to be the threshold for $\cF_{2,1}$ as seen by taking $k=2, s=1$ in our main theorem, which we now state.

\begin{theorem}\label{thm:mainthresh}
Let $M \subset \FF_p^n$ be a set chosen uniformly at random among sets of fixed size $\abs{M}$. Let $k\geq 2$ be a positive integer with $k<p$. Let $\cF_{k,s}$ be the collection of nonempty varieties determined by the vanishing of $s$ polynomials which each have degree at most $k$. Then letting
    $$
    c_{k,s} = \frac{p^{(s-1)k}\log p}{\log(1+(p-1)^{-1})},
    $$
    we have for every $\eps > 0$
    \[ \pr{M \text{ pierces } \cF_{k,s}} = \begin{cases} 
          1-o(1), & |M| \geq (c_{k,s}+\eps) {n \choose k}\\
          o(1), & |M| \leq (c_{k,s}-\eps) {n \choose k} 
       \end{cases},\]
    as $n\to \infty$. Note that $c_{k,s} = (1+o_{p\to \infty}(1))p^{(s-1)k+1}\log p$.
\end{theorem}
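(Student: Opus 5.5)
We prove the two directions of Theorem \ref{thm:mainthresh} separately. The upper bound uses a first moment (union bound) over a reduced family of varieties. The lower bound uses a second moment argument on a carefully chosen subfamily.

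\emph{Reduction and upper bound.} First, we may replace $\cF_{k,s}$ by minimal obstructions. If $M$ misses $V(f_1,\dots,f_s)$ then it misses every variety containing it, so it suffices to pierce those varieties which are smallest in the appropriate sense. Heuristically, the worst varieties have density $p^{-s}$ and are defined by $s$ polynomials. The number of choices of $(f_1,\dots,f_s)$ of degree $\le k$ is $p^{s\binom{n+k}{k}} = p^{s\binom nk(1+o(1))}$. For the first moment we cannot simply multiply by $(1-p^{-s})^{|M|}$, because most varieties have density near $p^{-s}$, but some have much larger complement. We therefore stratify the tuples $(f_1,\dots,f_s)$ by the density $\delta$ of $V$. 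Fix $\delta$, and let $N_\delta$ be the number of tuples with $|V|/p^n\approx \delta$. Then
\[
\Pr[\exists V \text{ of density }\delta \text{ missed}] \le N_\delta (1-\delta)^{|M|}(1+o(1)).
\]
The key input is a bound of the form $N_\delta \le p^{s\binom{n+k}k}\cdot \exp(-\text{something})$ when $\delta$ is much smaller than $p^{-s}$. I expect this to come from equidistribution/rank arguments: polynomials of high analytic rank are equidistributed, and low-rank tuples are few. We must pin down the extreme case. Sets like $\{x_1=\dots=x_{?}=0\}$ have small density but are few, while density $p^{-s}$ varieties are plentiful.

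\emph{Where the constant comes from.} The constant $c_{k,s}=p^{(s-1)k}\log p/\log(1+(p-1)^{-1})$ has the shape (number of varieties)$\times (1-p^{-1})^{|M|}$. Here the count behaves like $p^{\binom nk}$ up to lower order, but with density $p^{-1}$ per \emph{one} equation. This suggests the extremal family is different from what is described above. Concretely, $|M|\log(1+\frac1{p-1})^{-1}\cdot\ldots$ balancing $p^{(s-1)k}\binom nk\log p$ suggests the extremal varieties are $V(f)\cap W$, where $W$ is an affine subspace of codimension about $(s-1)k$? More likely, $(s-1)$ equations are used to restrict to a subspace of codimension $k(s-1)$ cheaply. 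Indeed $f_i = \prod_{j=1}^k \ell_{ij}$ style polynomials (products of linear forms, degree $k$) cut density by $p^{-k}$ in a sense: $\{x: \ell_1(x)=\dots=\ell_k(x)=0\}$ needs each $f_i = $ something vanishing only there. A single degree-$k$ polynomial $1-\prod_{j\le k}(1-\ell_j^{p-1})$ has too high degree, so instead we take a subspace $U$ of codimension $(s-1)k$ cut out by $s-1$ polynomials of degree $k$. Each $f_i$ vanishes exactly on $\{\ell_{i1}=\dots=\ell_{ik}=0\}$ when $k<p$, via a norm-form/anisotropic form construction. Then we take a generic degree-$k$ $f$ restricted to $U$. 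The density is $p^{-(s-1)k}\cdot p^{-1}$, and the count is $\approx p^{\binom nk}$. This gives the expected threshold $p^{(s-1)k}\binom nk\log p/\log(1+\frac1{p-1})$, matching $c_{k,s}$. The upper bound therefore reduces to showing that every nonempty variety in $\cF_{k,s}$ either contains such a structure or has density bounded below in a way that the union bound over all tuples still beats $p^{\binom nk}$-type counts. This is the main obstacle, the counting-versus-density trade-off. I would try induction on $s$: either $f_s$ restricted to $V(f_1,\dots,f_{s-1})$ is high rank, so the density drops only by a factor $\approx p^{-1}$, or it is low rank, so there are few choices.

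\emph{Lower bound.} Fix $U$ of codimension $(s-1)k$ as above. Let $X$ count degree-$k$ polynomials $g$ on $U\cong\F_p^{n-(s-1)k}$ with $V(g)\cap U\cap M=\emptyset$. Then $M\cap U$ has size about $|M|p^{-(s-1)k}$, which lies below $(\log p/\log(1+\frac1{p-1})-\eps')\binom{n}{k}$. Apply the second moment method: $\mathbb E X\to\infty$, and $\mathrm{Var} X=o((\mathbb E X)^2)$. The variance is controlled by pair correlations of $(g,g')$. The values $(g(x),g'(x))$ are jointly equidistributed on $\F_p^2$ unless $g-cg'$ has low rank, so pairs with low-rank differences are few. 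Write each $g$ as a nonhomogeneous polynomial, or use $g+c$, to ensure nonemptiness.
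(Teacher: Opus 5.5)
\textbf{Lower bound.} This half is sound and follows the paper's route. You take $U=V(L)$ from the norm-form construction (Lemma~\ref{lem:warn-tight}) with $\dim L=s-1$ and $|U|=p^{n-(s-1)k}$, concentrate $|M\cap U|$ by Chernoff, and run a second moment inside $U$ over $g$ that are high rank relative to $L$. Restricting to such $g$ also settles nonemptiness. For $h=1$ the pair analysis is your remark that pairs with a low-rank combination number only $p^{(1+o(1))\binom{n}{k}}$, each contributing at most $(1-p^{-1}+o(1))^{|M\cap U|}$. This needs the equidistribution error for good pairs to be $o(n^{-k})$, which is why the paper takes $r=k2^{k+1}\log_p n$.

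\textbf{Upper bound: the first moment gives the wrong constant.} Here there is a genuine gap. A plain first moment $N_\delta(1-\delta)^{|M|}$ cannot give $c_{k,s}$ when $s\ge 2$, however well you bound $N_\delta$. Take your own extremal family: fixed $U$, $p^{(1-o(1))\binom{n}{k}}$ high-rank $g$, and $\delta=p^{-(s-1)k-1}(1+o(1))$. The expectation tends to infinity unless $|M|\ge(1-o(1))\frac{\log p}{-\log(1-p^{-(s-1)k-1})}\binom{n}{k}$. Strict convexity of $z\mapsto-\log(1-z)$ gives $-\log(1-p^{-(s-1)k}p^{-1})<p^{-(s-1)k}\bigl(-\log(1-p^{-1})\bigr)$, so this exceeds $c_{k,s}\binom{n}{k}$. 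For $p=3$, $k=2$, $s=2$ it is about $29.1\binom{n}{2}$ against $24.4\binom{n}{2}$. In between, the expected number of missed varieties genuinely diverges while $M$ still pierces with high probability, as the paper notes in its introduction. The expectation is dominated by the exponentially rare event that $|M\cap U|$ is atypically small. In the with-replacement model, $\mathbb{E}(1-p^{-1})^{|M\cap U|}=(1-p^{-(s-1)k-1})^{|M|}$, which is exponentially larger than $(1-p^{-1})^{\mathbb{E}|M\cap U|}$. Your heuristic match with $c_{k,s}$ silently used the typical value of $|M\cap U|$, i.e.\ conditioning, which your union bound never does.

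\textbf{What is missing.} You need a two-stage union bound together with the structural input that makes it work.
\begin{itemize}
\item Decompose every $S$ as $H\oplus L$, with $L$ the maximal $r$-low-rank subspace and $H$ then $r$-high rank relative to $L$ (Lemma~\ref{l:decomp}). This canonical split replaces your undefined ``high rank restricted to $V(f_1,\dots,f_{s-1})$''.
\item For fixed $L$, Chevalley--Warning gives $|V(L)|\ge p^{n-k\ell}$ (Lemma~\ref{lem:warning}). This is the quantitative bound on how much density low-rank constraints can remove, which your ``few choices'' branch lacks. It also makes the error in $|V(H)\cap V(L)|/|V(L)|=p^{-h}+o(1)$ negligible (Lemma~\ref{lem:reldens}).
\item Chernoff ensures $|M\cap V(L)|\ge(t_h+\delta)\binom{n}{k}$ except with probability $\exp(-\Omega(n^k))$. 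Conditionally on this, the first moment inside $V(L)$ fails with probability $\exp(-\Omega(n^k))$ (Proposition~\ref{prop:relativethresh}, Lemma~\ref{lem:chernoffthresh}). Both bounds survive the union over the $p^{o(n^k)}$ low-rank $L$ (Lemma~\ref{lem:lowrankcount}).
\end{itemize}
This gives threshold $d_{h,\ell}=p^{k\ell}t_h$ for each type. You must still check $d_{h+1,s-h-1}\le d_{h,s-h}$ (Lemma~\ref{lem:dhl-sizes}), so that $(h,\ell)=(1,s-1)$ is the bottleneck. The purely low-rank case $h=0$ is handled by a direct first moment using $|V(S)|\ge p^{n-sk}$.
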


Only in the case of hypersurfaces does the  threshold coincide with the first moment threshold: for $s \geq 2$, $X_{\abs{M}}$ does not concentrate around its expectation. There is a regime above the probability threshold but below the first moment threshold where with high probability $X_{\abs{M}} = 0$ ($M$ pierces $\cF_{k,s}$) but $\EE X_{\abs{M}} \to \infty$. Certainly in this setting we need more than a vanilla second moment method. Indeed, Theorem \ref{thm:mainthresh} is proved via several more refined thresholds for piercing sub-collections that partition $\cF_{k,s}$.

\subsection{Application to Szemerédi's theorem with random differences}\label{sec:szres}

As an application of our arguments, we improve lower bounds on the size of a set of random differences for which Szemer\'{e}di's theorem on $k$-term arithmetic progressions ($k$-APs) holds with high probability.\footnote{We caution the reader that the numerology is such that the degree of the polynomials that are being pierced and the length of the corresponding arithmetic progression it is applied to are out by one: degree $2$ polynomials yield a result about $3$-APs, etcetera.} This problem has a rich history in the integers (see, e.g., \cites{ruzsa1978difference,ondiffsumint,bour,christ,rszsurvey,bdg19,bcs2024}) and more recently over finite fields. We will just discuss the latter, and refer the reader to (say) the introduction of \cite{bcs2024} for a summary of the state of affairs on the former. Here our ambient group is $\F_p^n$ (where again we think of $p$ as being fixed and $n$ as being large), and we denote the size of the group by $N:= p^n$.

We say \textit{Szemer\'{e}di's theorem holds for $k$-APs and $\alpha$-dense sets with differences in $M$} if there exists $N_0(\alpha,k)$ such that for every $N > N_0(\alpha, k)$, and subset $A \subseteq \F_p^n$ with $\abs{A} \ge \alpha N$, $A$ contains a nontrivial $k$-term AP with common difference in $M$.

Since Szemer\'{e}di's theorem holding with differences in $M$ is a monotone property (adding more elements to $M$ only makes Szemer\'{e}di's theorem easier to satisfy), there is a probability threshold function $\cP(\alpha,k)$, which we define to be the minimal density such that when $M$ is sampled uniformly with $\abs{M} = \cP(\alpha,k) N$, the probability of Szemer\'{e}di's theorem holding for $k$-APs and $\alpha$-dense sets with differences in $M$ is at least $1/2$.  

It was shown by the first author \cite{altfp} that for $\alpha = p^{-2}$, and sufficiently large $n$,
$$
\cP(\alpha,3)N \ge \frac{1}{2} (\log_p N)^2 - 11\log_p N\log_p\log N.
$$ In particular, this demonstrated that the finite field threshold is necessarily larger than what is conjectured in the integers $\{1,\ldots, N\}$, where it is believed \cite{rszsurvey} (with analogous definitions) that $\cP(\alpha,k)N = \Theta_{\alpha,k} (\log N)$ for all $k$ (this is only known for $k=2$, due to Bourgain \cite{bour}). Bri\"et \cite{briet21} then showed that for $\alpha=p^{-k}$, and sufficiently large $n$,
$$\cP(\alpha,k)N \ge \frac{1}{(k-1)!}( \log_p N)^{k-1} - C_{p,k}(\log_p N)^{k-2}(\log_p\log N)^2.$$ 
Recently, Zheng \cite{zheng25} obtained a small improvement by combining a generalisation of the argument of \cite{altfp} with improved bounds on the partition rank of a tensor in terms of its analytic rank \cite{moshkovitz2024quasilinearrelationpartitionanalytic}:
$$\cP(\alpha,k)N \ge \frac{1}{(k-1)!}( \log_p N)^{k-1} - C_{p,k}(\log_p N)^{k-2}(\log\log N)^{1+\eps},$$
where again one may take $\alpha = p^{-k}$. 

On the other hand, best-known upper bounds, which are due to Bri\"et--Castro-Silva \cite{bcs2024}, are of the following form.
\begin{theorem}[{\cite{bcs2024}}]
With notation as above, we have 
\[ \cP(\alpha, k)N \leq C_{\alpha, k} N^{1-2/k}\log^{2k+1} N.\]
\end{theorem}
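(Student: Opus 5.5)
This theorem is quoted from \cite{bcs2024} and we do not reprove it here. What follows is only a sketch of how one could prove such an upper bound; the key steps are stated as expectations rather than checked. The plan is to prove a \emph{sparse counting lemma} and then deduce the existence statement from the dense theorem.

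\textbf{Counting functional and reduction.} For bounded $f\colon \F_p^n \to [0,1]$ write
\[ \Lambda_M(f) = \frac{1}{|M|N}\sum_{d\in M}\sum_{x}\prod_{i=0}^{k-1} f(x+id), \qquad \Lambda(f)=\EE_{x,d}\prod_{i=0}^{k-1} f(x+id). \]
The first goal is to show that, with probability at least $1/2$,
\[ \sup_{A}\abs{\Lambda_M(1_A)-\Lambda(1_A)} \le \tfrac12 c(\alpha,k), \]
where $c(\alpha,k)>0$ is the Varnavides lower bound for $\Lambda(1_A)$ valid for $|A|\ge \alpha N$. Given this, every $\alpha$-dense $A$ has $\Lambda_M(1_A)\ge c/2$. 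The trivial progressions ($d=0$) contribute at most $1/|M|$ to $\Lambda_M(1_A)$, so a nontrivial $k$-AP with difference in $M$ exists once $|M|\gg 1/c$.

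\textbf{Symmetrization and the $k$-linear form.} We would symmetrize, replacing $M$ by independent uniform samples $d_1,\dots,d_m$ (this suffices by monotonicity), and introduce Rademacher signs $\eps_j$. It then remains to bound
\[ \EE\sup_{f_0,\dots,f_{k-1}}\Bigl|\frac1m\sum_j \eps_j\, \EE_x \prod_i f_i(x+id_j)\Bigr|, \]
where the supremum is over bounded functions, i.e.\ a Rademacher process indexed by $k$-tuples. Because the form is multilinear, the supremum over bounded functions is attained at extreme points, namely $\pm1$-valued functions (which also cover indicators). Thus the task is to control an injective-type norm of the random $k$-tensor
\[ T=\sum_j \eps_j\, T_{d_j}, \qquad T_{d}(x_0,\dots,x_{k-1})=\mathbf 1\bigl[x_i=x_0+id \ \text{for all } i\bigr]. \]

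\textbf{Main obstacle: the tensor norm bound.} The hardest step will be bounding this norm by $o(1)$ as soon as $m\ge CN^{1-2/k}\log^{2k+1}N$. The approach I would try first runs as follows.
\begin{enumerate}
\item Fix $k-2$ of the functions. A naive union bound over all $2^{(k-2)N}$ choices of them is far too expensive, so the fixing has to be done carefully.
\item Instead, apply a Cauchy--Schwarz/decoupling step in the style of Ben-Eliezer, Lovett and Tal, or the locally-decodable-code argument of Bri\"et--Castro-Silva. This should reduce the sup to the operator norm of a sum $\sum_j \eps_j B_j$ of $N\times N$ matrices.
\item Here each $B_j$ should be a partial permutation matrix weighted by the fixed functions, after the rows have been split into about $N^{1-2/k}$ blocks.
\item Matrix Khintchine would then give a bound of order $\sqrt{\log N}\,\bigl\|\sum_j B_jB_j^{*}\bigr\|^{1/2}$.
\item Iterating the decoupling $k-2$ times should be what produces the exponent $1-2/k$ on $N$ and the factor $\log^{2k+1}N$.
\end{enumerate}
Making this recursion close without losing extra powers of $N$ is the genuinely delicate point. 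I would expect to follow \cite{bcs2024} there rather than reinvent it.
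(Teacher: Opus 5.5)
The paper does not prove this statement. It is quoted from \cite{bcs2024} only to compare with the lower bounds, so deferring to that reference, as you do, is exactly the paper's treatment. Your reduction is the standard framework for this problem and is sound: Varnavides, passing to i.i.d.\ differences by monotonicity, symmetrization, and reducing to the Rademacher width of the $k$-AP tensor over $\pm1$-valued functions.

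As a proof sketch, however, it has a genuine gap at its only substantive step, and steps 2--5 would not work as described. The whole difficulty is uniformity over the functions you want to fix. Cauchy--Schwarz does not lower the arity: removing $f_0$ squares the remaining functions and turns the Rademacher sum into a chaos $\sum_{j,j'}\epsilon_j\epsilon_{j'}(\cdots)$. Already for $k=3$ this defeats the operator-norm approach. Write $F=f_1\otimes f_2$ and $S_j=\{(a,b): b-a=d_j\}$, which are disjoint for distinct $d_j$. Then $\sum_x\bigl|\sum_j\epsilon_j f_1(x+d_j)f_2(x+2d_j)\bigr|^2=\langle EF,\widetilde B\, EF\rangle$. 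Here $E$ is the diagonal matrix equal to $\epsilon_j$ on $S_j$, and $\widetilde B$ is the deterministic $0/1$ matrix whose $(j,j')$ block is the permutation matrix of $(x+d_j,x+2d_j)\mapsto(x+d_{j'},x+2d_{j'})$. The signs enter only through a diagonal conjugation, and $\|\widetilde B\|=m$. The resulting bound $m\cdot|S|=m^2N$ is exactly the trivial one. So no matrix Khintchine estimate on matrices of this kind sees any cancellation, and your block splitting and repeated decoupling do not indicate how to change this. Nor do they explain where the exponent $1-2/k$ and the factor $\log^{2k+1}N$ would come from.

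For orientation: the earlier bound $N^{1-1/\lceil k/2\rceil}$ agrees with $N^{1-2/k}$ only for even $k$. To my knowledge, the odd-$k$ improvement in \cite{bcs2024} comes from importing the Kikuchi-matrix method from the recent locally decodable code lower bounds of Alrabiah, Guruswami, Kothari and Manohar. One works with matrices indexed by $\ell$-element subsets, of dimension about $\binom{N}{\ell}$, so matrix Khintchine costs $\sqrt{\ell\log N}$. The level $\ell$ is then chosen to balance this against the row degrees. That is the missing idea in your outline.
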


The best-known upper bounds over the integers are of a similar shape. By analogy to the aforementioned conjecture over the integers (see \cite{rszsurvey}*{Ch. 3}, but also \cite{bl21} and \cite{bg22} for some evidence to the contrary), there is a belief that in $\F_p^n$ these lower bounds may in fact be the truth in terms of dependence on $N=p^n$. That is, one may not be able to hope for an improvement on the above lower bounds by more than a multiplicative constant. Indeed, in Proposition \ref{prop:densityupper} we note that any such improvement must necessarily consider obstructions additional to  the higher-order ones analysed in this paper. On the other hand, it is notable that the above lower bounds are independent of the density $\alpha$, which is a rather disappointing state of affairs for a Szemer\'edi-type theorem. Our application is to rectify this situation by improving the leading constant from $\Omega_{p,k}(1)$ to $\Omega_{p,k}(\alpha^{-1}\log \alpha^{-1})$. More precisely, we prove the following in Section \ref{sec:rszproof}.

\begin{restatable}[Random Szemer\'{e}di lower bound]{theorem}{rsz}\label{thm:rszbound}
     Let $p > k \ge 3$ be fixed. Let $\alpha \in (0,1/p)$, and let $t$ be the positive integer such that $\alpha \in [p^{-(t+1)}, p^{-t})$. Then
    $$
    \cP(\alpha,k) p^n \ge (1-o(1))\frac{\log (p^t)}{\log(1+(p^t-1)^{-1})}{n \choose k-1},
    $$
    or with $N = p^n$,
    \begin{align*}
        \cP(\alpha,k)N &\ge \bigp{1-o(1)}\frac{\log (p^t)}{(k-1)!\log(1+(p^t-1)^{-1})} (\log_p N)^{k-1} \\
        &\geq  \bigp{1-o_{\substack{\alpha \to 0 \\ n \to \infty}}(1)}\frac{\alpha^{-1}\log\alpha^{-1}}{p(k-1)!}(\log_p N)^{k-1}.
    \end{align*}
\end{restatable}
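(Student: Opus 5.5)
The plan is to derive Theorem \ref{thm:rszbound} from the lower-bound half of Theorem \ref{thm:mainthresh}, applied with degree $k-1$ and $s=t$ polynomials. Suppose that $M$ fails to pierce $\cF_{k-1,t}$. Then there are polynomials $f_1,\dots,f_t$ of degree at most $k-1<p$ whose common zero set $V$ is nonempty and disjoint from $M$. From these we will build an $\alpha$-dense set $A$ that contains no nontrivial $k$-AP with common difference in $M$.

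\textbf{Construction of $A$.} Let $F=(f_1,\dots,f_t):\F_p^n\to\F_p^t$, and first assume $0\notin M$. Since each $f_i$ has degree at most $k-1$, the $k$-th finite difference of $f_i$ along any direction vanishes. This identity holds because $k<p$, so $k!$ is invertible. Along an AP $x, x+d,\dots,x+(k-1)d$, each $f_i$ is therefore a polynomial in $j$ of degree at most $k-1$. We want a single value $b\in\F_p^t$ such that $A:=F^{-1}(b)$ meets no such AP with $d\in M$, while $|A|\ge \alpha p^n$.

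The simplest choice would be to define $A$ by the condition that $g_i(x):=f_i(x)$ has a prescribed $(k-1)$-st difference. A cleaner approach uses the top-degree homogeneous parts. Write $h_i$ for the degree-$(k-1)$ part of $f_i$. Along any AP the $(k-1)$-st difference of $f_i$ equals $(k-1)!\,h_i(d)$, independently of $x$. If $A=F^{-1}(b)$ contained a whole $k$-AP with difference $d$, then every $f_i$ would be constant along it, so $(k-1)!\,h_i(d)=0$ for all $i$. It follows that $d\in V(h_1,\dots,h_t)$. For this reason we apply Theorem \ref{thm:mainthresh} (or rather its proof, which I expect handles homogeneous systems; the first-moment count is dominated by them) to the family of varieties cut out by $t$ homogeneous forms of degree $k-1$ with the origin removed. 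The non-piercing probability bound should transfer to this family, since the lower-bound argument presumably finds a non-pierced variety among generic top-degree systems.

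Pigeonholing over the $p^t$ fibres of $F$ gives some $b$ with $|F^{-1}(b)|\ge p^{n-t}>\alpha p^n$, using $\alpha<p^{-t}$. So $A$ is $\alpha$-dense and is $k$-AP-free with differences in $M$, and Szemer\'edi's theorem fails for $M$.

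\textbf{Applying the threshold.} Theorem \ref{thm:mainthresh} with degree $k-1$ and $s=t$ says that if $|M|\le (c_{k-1,t}-\eps)\binom{n}{k-1}$ then $M$ fails to pierce with probability $1-o(1)>1/2$. Here
\[c_{k-1,t}=\frac{p^{(t-1)(k-1)}\log p}{\log(1+(p-1)^{-1})}.\]
This does not match the claimed constant $\log(p^t)/\log(1+(p^t-1)^{-1})$, which is the constant obtained when $\F_p^t$ is treated as a single field, with density $p^{-t}$ of hitting $0$. So I would instead rerun the first and second moment argument directly for the map $F$ with homogeneous top parts. Each $d\neq0$ satisfies $h(d)=0$ with probability $p^{-t}$. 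The relevant count is then $p^{t\binom{n}{k-1}(1+o(1))}(1-p^{-t})^{|M|}$, which gives the threshold $t\log p/\log(1+(p^t-1)^{-1})\cdot\binom{n}{k-1}$.

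\textbf{Main obstacle.} The main obstacle is concentration for this count. I would reuse the paper's refinement of Theorem \ref{thm:mainthresh}, namely the partition into sub-collections according to the rank structure of the system, and restrict to the sub-collection of ``generic'' (high-rank) systems. For that sub-collection the second moment should work, because high rank forces near-independence of the events $h(d)=0$ across pairs of systems. The final asymptotic then follows from $\log(1+x)\sim x$ with $x=(p^t-1)^{-1}\approx p^{-t}\ge \alpha$ and $p^{-t}\le p\alpha$. These give $t\log p\cdot p^t\ge \alpha^{-1}\log\alpha^{-1}/p\,(1-o(1))$, and $\binom{n}{k-1}\sim(\log_pN)^{k-1}/(k-1)!$.
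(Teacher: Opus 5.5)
Your final route is correct, and its probabilistic core is the same as the paper's. The fact that $t$-dimensional, $r$-high-rank systems of homogeneous degree-$(k-1)$ forms are w.h.p.\ not pierced below $t_t\binom{n}{k-1}$ samples is exactly Proposition~\ref{prop:relativethresh} with $L=\{0\}$, $h=t$, together with Remark~\ref{rem:homog}. The paper packages this as Proposition~\ref{p:rszvariety}, so no second-moment computation needs to be redone.

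Where you differ is the deterministic step:
\begin{itemize}
\item The paper takes $A=V(H)$ itself. It uses homogeneity to show that a $k$-AP in $A$ with difference $d$ forces $d\in A$, and gets $|A|\ge (p^{-t}-o(1))p^n$ from Lemma~\ref{lem:reldens}, which needs the high rank.
\item You take a fibre of $F=(h_1,\dots,h_t)$. The $(k-1)$-st difference along $d$ equals $(k-1)!\,h_i(d)$, which forces $d\in V(h_1,\dots,h_t)$, and pigeonhole gives density $\ge p^{-t}$ with no rank hypothesis at all. The paper's set is your fibre over $b=0$.
\end{itemize}

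This decoupling of density from rank buys more than you used. You abandoned the route through Theorem~\ref{thm:mainthresh} because $c_{k-1,t}$ did not match $t_t$. But you are proving a \emph{lower} bound for $\mathcal{P}(\alpha,k)$, and $c_{k-1,t}=p^{(t-1)(k-1)}t_1\ge t_t$ by Lemma~\ref{lem:dhl-sizes} applied with degree $k-1\ge 2$, strictly once $t\ge 2$.

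Two caveats apply to that abandoned route. First, Theorem~\ref{thm:mainthresh} cannot be quoted as stated: it yields an unpierced $V(f_1,\dots,f_t)$, while you need $V(h_1,\dots,h_t)\cap M\subseteq\{0\}$ for the top-degree parts. Second, your guess that the extremal systems are ``generic'' is wrong. They are $t-1$ norm forms from Lemma~\ref{lem:warn-tight}, cutting out a coordinate subspace $W$ of codimension $(t-1)(k-1)$, plus one generic form.

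That construction is nonetheless homogeneous, so it fits your fibre argument:
\begin{itemize}
\item About $p^{-(t-1)(k-1)}|M|$ samples land in $W$.
\item Proposition~\ref{prop:relativethresh} with $L=\{0\}$, $h=1$ on $W\cong\mathbb{F}_p^{\,n-(t-1)(k-1)}$, plus Remark~\ref{rem:homog}, gives w.h.p.\ a homogeneous form on $W$ avoiding those samples whenever $|M|\le (c_{k-1,t}-\varepsilon)\binom{n}{k-1}$.
\item Your fibre argument then gives $\mathcal{P}(\alpha,k)p^n\ge(1-o(1))\,c_{k-1,t}\binom{n}{k-1}$.
\end{itemize}
This implies the stated theorem and raises the leading constant from roughly $t p^t\log p$ to roughly $p^{(t-1)(k-1)+1}\log p$. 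The paper's zero-set construction cannot reach this, because $V(L\oplus H)$ has density only about $p^{-(t-1)(k-1)-1}$, far below $\alpha$.
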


We note that the same methods prove an analogous bound for the case $k=2$, though many aspects of the argument simplify in this case so we omit the details. Instead we refer the reader to forthcoming followup work of the second author \cite{nt26} which will work out many of these details in the generality of finite abelian groups.

\subsection{Further applications and directions}
Although Theorem \ref{thm:mainthresh} is stated for $p$ fixed, its proof tolerates growing torsion. Of course, algebraic sets are then no longer dense in this setting, and if one wishes to prove a version of Theorem \ref{thm:rszbound} in such groups, one should instead consider the \textit{approximate} vanishing of polynomials, i.e., establish bounds for the piercing of (higher-order) Bohr sets. We don't pursue this in this document, but expect that the strategy here generalises. Indeed, the 2-AP case is undertaken in \cite{nt26}, where one replaces polynomial phases/higher-order characters with linear phases/characters.

 We furthermore note the relevance of the above line of enquiry to questions about the chromatic and independence numbers of random Cayley graphs. For example, Alon \cite{alon} proves a bound on the densities at which random Cayley graphs are $3$-colourable by proving a bound on  the threshold at which radius $\frac{1}{3}$ Bohr sets are pierced. As mentioned in the previous paragraph, our methods are amenable to understanding the piercing of Bohr sets, as is demonstrated in forthcoming followup work of the second author \cite{nt26} which improves this bound in the generality of finite abelian groups, thus improving corresponding bounds on the densities at which random Cayley graphs are $3$-colourable in this setting.

\subsection{Organisation of the paper.} In Section \ref{s:prelim} we compile some preliminary ingredients. In Section \ref{s:relative}, we prove via a second moment method Proposition \ref{prop:relativethresh}, a suitably quantitative threshold result for the piercing of high rank varieties relative to a fixed low rank variety. In Section \ref{s:main-proof}, we bootstrap Proposition \ref{prop:relativethresh} in a modified second moment method, yielding the proof the Theorem \ref{thm:mainthresh}. We conclude in Section \ref{sec:rszproof} with the proof of Theorem \ref{thm:rszbound}, the application to the random Szemer\'edi problem.

\subsection{Notation, conventions.} We use Vinogradov notation $\gg$, $\ll$ in addition to big-$O$ notation. We will also use the asymptotic notation $\omega$, which may be defined by the property that $f = o(g)$ if and only if $g = \omega(f)$. All asymptotic notation may be understood to refer to the limit $n\to \infty$ unless specified otherwise. We will say that an event $E=E_n$ occurs \textit{with high probability} if $\PP(E) = 1-o(1)$. We have parameters $p,k$ throughout the paper; it is always tacitly assumed that $p>k$.

\subsection*{Acknowledgements.} The authors thank Dor Elboim for helpful conversations and Sarah Peluse for generous feedback on an earlier version of this document.

\section{Preliminaries}\label{s:prelim}

Let $k\geq 2$. We will denote by $\F_p[x_1,\ldots, x_n]_{\leq k}$ the subspace of $\F_p[x_1,\ldots,x_n]$ comprising polynomials of degree at most $k$.

First we record some relations between polynomials and multilinear forms. For a polynomial $f$ in $n$ variables over $\F_p$ and $h \in \FF_p^n$, let the discrete derivative $\Delta_h$ be defined by
$$
(\Delta_h f)(x) := f(x+h)-f(x).
$$
Next we define the $k$-fold multilinearisation map
$\phi_k$, which takes as input $f\in  \F_p[x_1,\ldots,x_n]_{\leq k}$  and outputs a symmetric $k$-linear form $\phi_k(f):=T : \FF_p^n \times \dots \times \FF_p^n \to \FF_p$ defined by 
\begin{equation}\label{eq:multi}
    T(h_1,\dots,h_k) \coloneqq (\Delta_{h_1}\dots\Delta_{h_k}f)(x) = \sum_{\eps \in \set{0,1}^k} (-1)^{k-\abs{\eps}} f(\eps \cdot h),
\end{equation}
where $\eps \cdot h = \sum_i \eps_i h_i$ and $\abs{\eps}$ is the number of ones in $\eps$. See \cite[Lemma 2.4]{gowolf} for a proof that $T$ as defined in the previous sentence is indeed a symmetric $k$-linear form independent of $x$. We note that $\phi_k$ is linear. Then the \textit{bias} of $T$ is defined by
$$
\text{bias}(T) := \left|\EE_x e_p(T(x_1,\ldots, x_k))\right|,
$$
where probabilities and expectations are uniform over $x \in (\FF_p^n)^k$, and $e_p(\cdot)$ is shorthand for $e^{2\pi i \cdot / p}$. The analytic rank of $f$ and $T$ are then defined by
$$
\ark(f) \coloneqq \ark(T) \coloneqq -\log_p \text{bias}(T).
$$

For more on the background established thus far in this section, we encourage the reader to consult \cite{gowolf}. 

Note that if $f$ is a polynomial of degree strictly less than $k$, then $T(h_1,\dots,h_k) \equiv 0$ so $\ark(f) = 0$. Going forward it will be clear from context what $k$ we are using in this $k$-fold multilinearisation, or when we refer to the analytic rank of a given polynomial.

\begin{lemma}[Low rank count]\label{lem:lowrankcount}
    For $k < p < r$ there are at most $p^{O_k(n^{k-1}r \log_p r)}$ polynomials $\FF_p^n \to \FF_p$ with degree at most $k$ and analytic rank at most $r$.
\end{lemma}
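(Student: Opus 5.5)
We will reduce the count to a count of low-rank symmetric $k$-linear forms and bound that count using a partition rank bound in terms of analytic rank.

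\textbf{Step 1: reduce to multilinear forms.} Since $k<p$, the map $\phi_k$ restricted to $\F_p[x_1,\ldots,x_n]_{\leq k}$ has kernel exactly $\F_p[x_1,\ldots,x_n]_{\leq k-1}$. Indeed, the top-degree homogeneous part of $f$ is recovered from $T=\phi_k(f)$ by $f_k(x)=T(x,\dots,x)/k!$, and $k!$ is invertible because $k<p$. The kernel has size $p^{\binom{n+k-1}{k-1}} = p^{O_k(n^{k-1})}$. Hence the number of polynomials of degree at most $k$ and analytic rank at most $r$ is at most $p^{O_k(n^{k-1})}$ times the number of symmetric $k$-linear forms $T$ with $\ark(T)\le r$. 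Since $r>p\geq 2$, the factor $p^{O_k(n^{k-1})}$ is absorbed into $p^{O_k(n^{k-1} r\log_p r)}$. It therefore suffices to count $k$-linear forms, which we may do without imposing symmetry.

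\textbf{Step 2: pass to partition rank.} We invoke a partition-rank versus analytic-rank bound, such as that of Moshkovitz--Zhu \cite{moshkovitz2024quasilinearrelationpartitionanalytic}: any $k$-linear form with $\ark(T)\le r$ satisfies
\[ \mathrm{prk}(T)\le C_k\, r\log_p r \]
once $r$ is large enough relative to $p$; this is where the hypothesis $p<r$ is used. Put $R = C_k r\log_p r$ (rounded up). It remains to count $k$-linear forms of partition rank at most $R$.

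\textbf{Step 3: count low partition rank forms.} A form of partition rank at most $R$ is a sum of at most $R$ terms of the form
\[ a(x_S)\, b(x_{[k]\setminus S}), \qquad \emptyset\neq S\subsetneq[k], \]
where $a$ is an $|S|$-linear form and $b$ is a $(k-|S|)$-linear form. Each such pair is specified by the choice of $S$ ($2^k$ options) together with the coefficients of $a$ and $b$, at most $p^{n^{|S|}+n^{k-|S|}} \le p^{2n^{k-1}}$ options. So there are at most $(2^k p^{2n^{k-1}})^R$ forms, and the total is $p^{O_k(n^{k-1})R} = p^{O_k(n^{k-1} r\log_p r)}$.

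The only nontrivial input is Step 2. The earlier polynomial bound $\mathrm{prk}\le C r^{c}$ would only give $p^{O(n^{k-1}r^c)}$, so obtaining the stated $r\log_p r$ dependence requires the quasilinear result. One should also check that this result applies over $\F_p$ with constants depending only on $k$, and in the regime $r>p$. If it only gives a bound like $O_k(r\log(r+1))$ in natural logarithms, the difference is a factor $\log p$, which is harmless once $p$ is fixed.
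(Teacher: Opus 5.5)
Your proof is correct and follows the paper's argument. Step 1 is exactly the paper's observation that $\ker\phi_k$ is the space of polynomials of degree at most $k-1$ (using $k!\not\equiv 0 \pmod p$), and Steps 2--3 (Moshkovitz--Zhu, then counting partition-rank decompositions) unpack the count of $k$-linear forms with analytic rank at most $r$ that the paper simply imports from Section 2 of Zheng's paper. The one thing to pin down is the exact form of the Moshkovitz--Zhu bound, namely the power and base of the logarithm, since the $r\log_p r$ shape of the exponent rests on it; the paper relies on this same external input through Zheng.
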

\begin{proof}
    The number of $k$-multilinear forms $T$ for which $\ark(T) \le r$ is at most $p^{O_k(n^{k-1}r \log_p r)}$. This is shown, for example, in \cite[Section 2]{zheng25}. 
    
    Recalling that $\phi_k$ is linear, we see that if $f$ is homogeneous of degree $k$ then
    $$
    T(x,\dots,x) = \sum_{i=0}^k (-1)^{k-i} {k \choose i} f(ix) = f(x) \sum_{i=0}^k (-1)^{k-i} {k \choose i} i^k = k! \cdot f(x).
    $$
    Thus, the kernel of $\phi_k$ intersects the subspace of homogeneous degree $k$ polynomials trivially (we use here $k! \not\equiv 0 \pmod{p}$). Therefore the kernel of $\phi_k$ is the subspace of degree $\leq k-1$ polynomials, which has dimension $O_k(n^{k-1})$. This completes the proof. 
\end{proof}

We will need a lemma of Gowers and Wolf that controls exponential sums by the analytic rank of their phases.

\begin{lemma}[Exponential sum control \cite{gowolf}*{Lemma 3.2}]\label{lem:gauss} Let $f \in \F_p[x_1,\ldots, x_n]_{\leq k}$. Then
    $$
    \abs{\EE_{x \in \FF_p^n} e_p(f(x))} \le p^{-\ark(f)/2^{k-1}}.
    $$
\end{lemma}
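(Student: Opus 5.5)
The plan is to reduce to the multilinear form $T = \phi_k(f)$ by iterated Cauchy--Schwarz, the standard Gowers--Cauchy--Schwarz argument. Write $S = \EE_x e_p(f(x))$. The first step is the differencing inequality
\[ |\EE_x e_p(g(x))|^2 = \EE_{x,h} e_p(g(x+h)-g(x)) = \EE_h \EE_x e_p(\Delta_h g(x)), \]
which gives $|S|^2 \le \EE_{h_1} |\EE_x e_p(\Delta_{h_1} f(x))|$.

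Next I iterate this. Applying Cauchy--Schwarz (or H\"older) to move the square inside the average over $h_1$, and then differencing again in $x$, I obtain after $k-1$ steps
\[ |S|^{2^{k-1}} \le \EE_{h_1,\dots,h_{k-1}} \left|\EE_x e_p(\Delta_{h_1}\cdots\Delta_{h_{k-1}} f(x))\right|. \]
Since $\deg f\le k$, the function $x\mapsto \Delta_{h_1}\cdots\Delta_{h_{k-1}} f(x)$ has degree at most $1$. It is therefore an affine function of $x$, and its linear part is $x\mapsto T(h_1,\dots,h_{k-1},x)$ up to a constant depending on $h$. This identification comes from (\ref{eq:multi}): the quantity $\Delta_x$ of it equals $T(h_1,\dots,h_{k-1},x)$. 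Writing the affine function as $c(h) + \ell_h(x)$, the inner average has absolute value $\mathbf 1[\ell_h\equiv 0]$. Hence
\[ |S|^{2^{k-1}} \le \PP_{h}\bigl[T(h_1,\dots,h_{k-1},\cdot)\equiv 0\bigr] = \EE_{h_1,\dots,h_{k-1},x} e_p(T(h_1,\dots,h_{k-1},x)) = \text{bias}(T) = p^{-\ark(f)}. \]
Here the last expectation is real and nonnegative by orthogonality of characters. Taking $2^{k-1}$-th roots gives the claim.

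The main point needing care is the identification of the linear part of the $(k-1)$-fold derivative with $T(h_1,\dots,h_{k-1},\cdot)$. This follows because taking one more derivative $\Delta_{h_k}$ gives exactly $T(h_1,\dots,h_k)$, independently of $x$. I also need the bias to equal the probability that the contracted form vanishes. Both are routine.
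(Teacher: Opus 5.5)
Your proof is correct. The paper does not prove this lemma; it imports it from Gowers--Wolf, and your argument is the standard self-contained proof behind that citation: $k-1$ rounds of differencing with Cauchy--Schwarz, followed by the exact evaluation $\bigl|\EE_x e_p\bigl(c(h)+T(h_1,\dots,h_{k-1},x)\bigr)\bigr| = \mathbf{1}[T(h_1,\dots,h_{k-1},\cdot)\equiv 0]$. That final exact step is what avoids the extra Cauchy--Schwarz loss in the cruder bound $|\EE e_p(f)|\le \|e_p(f)\|_{U^k}$, which would only give exponent $2^{-k}$, and so it yields the stated exponent $2^{-(k-1)}$.
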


Given polynomials $f_1,\ldots, f_s\in \F_p[x_1,\ldots, x_n]$, note that
$$
V(f_1,\dots,f_s) = V(\subspan_{\F_p}(f_1,\dots,f_s)).
$$
It is rather more convenient to deal with algebraic sets defined by the vanishing of subspaces of polynomials, and henceforth this will generally be the perspective we adopt.

Next we will need the Chevalley--Warning theorem. The statement below may be found, for example, in \cite[Theorem 6.11]{ln}.

\begin{lemma}[Chevalley--Warning theorem]\label{lem:warning} Let $S \leq \F_p[x_1,\ldots, x_n]_{\leq k}$ be a subspace of polynomials and let $s:= \dim(S)$. If $\abs{V(S)} \ge 1$ then
    $$
    \abs{V(S)} \ge p^{n-sk}.
    $$
\end{lemma}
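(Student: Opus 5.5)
The statement is the Chevalley--Warning bound in its ``Warning's second theorem'' form, and I would reduce it to that classical result. Choose a basis $g_1,\dots,g_s$ of $S$. Then $V(S)=V(g_1,\dots,g_s)$. Each $g_i$ has degree at most $k$, so $\sum_i \deg g_i \le sk$. If $sk\ge n$, the claim $\abs{V(S)}\ge p^{n-sk}$ is trivial: the right side is at most $1$, and we are assuming $\abs{V(S)}\ge 1$. So we may assume $d:=\sum_i\deg g_i < n$.

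In that case Warning's second theorem applies to the system $g_1=\dots=g_s=0$. It says that a system of polynomials over $\F_q$ in $n$ variables with total degree $d<n$, having at least one common zero, has at least $q^{n-d}$ common zeros. Taking $q=p$ gives $\abs{V(S)}\ge p^{n-d}\ge p^{n-sk}$, as required. This is the version recorded in \cite[Theorem 6.11]{ln}, so in the paper one would simply cite it.

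If a self-contained argument were wanted, I would follow the standard proof of Warning's second theorem. First, encode the zero set by the indicator $P(x)=\prod_i (1-g_i(x)^{p-1})$, a polynomial of degree at most $(p-1)d$. Next, restrict to affine subspaces $W$ of dimension $n-d$ through a fixed common zero. Summing $P$ over such a $W$ counts the zeros in $W$ modulo $p$. Since the degree of $P$ restricted to $W$ is less than $(p-1)\dim W$, this sum vanishes mod $p$. Because $W$ contains the known zero, each such $W$ therefore contains at least $p$ zeros. Finally, a double-counting or induction argument over these subspaces upgrades this to the bound $p^{n-d}$.

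That final combinatorial step is the only delicate point; the usual induction is on $n-d$. Since this is a textbook result, I would just invoke it, noting only the trivial case $sk\ge n$ and the choice of a basis for $S$.
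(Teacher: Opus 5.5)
Your reduction is correct and is the same as the paper's treatment: the paper gives no proof and simply cites Warning's second theorem from Lidl--Niederreiter, and your steps (a basis of $S$ gives total degree $d\le sk$; the case $sk\ge n$ is trivial; otherwise $|V(S)|\ge p^{n-d}\ge p^{n-sk}$) are exactly what that citation needs. One slip in your optional self-contained sketch: the affine subspaces $W$ through the known zero should have dimension $d+1$ (any dimension exceeding $d$), not $n-d$, because the required inequality $(p-1)d<(p-1)\dim W$ fails for $\dim W=n-d$ whenever $d\ge n/2$.
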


We will also need that the above bound is tight. We include a proof for the convenience of the reader.

\begin{lemma}[Smallest nonempty variety]\label{lem:warn-tight}
    Let $s, k \geq 1$ and $n\geq sk$. Then there is a subspace $S\leq \F_p[x_1,\ldots, x_n]$ whose nonzero elements are each homogeneous of degree $k$, which has $\dim (S) = s$, and with $\abs{V(S)} = p^{n-sk}$.
\end{lemma}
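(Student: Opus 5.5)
Our plan is to build the example from norm forms of finite field extensions. Take the extension $\F_{p^k}/\F_p$, fix an $\F_p$-basis $\omega_1,\dots,\omega_k$ of $\F_{p^k}$, and define
\[
N(y_1,\dots,y_k) := \mathrm{N}_{\F_{p^k}/\F_p}\Bigl(\sum_{i=1}^k y_i\omega_i\Bigr) = \prod_{j=0}^{k-1}\Bigl(\sum_{i=1}^k y_i\omega_i^{p^j}\Bigr).
\]
This is a homogeneous polynomial of degree $k$ in $y_1,\dots,y_k$, and its coefficients lie in $\F_p$ because they are Galois invariant. Since the norm of a nonzero field element is nonzero, $N(y)=0$ if and only if $y=0$. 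So $N$ is a degree $k$ form in $k$ variables whose only zero over $\F_p$ is the origin.

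We first treat $s=1$. Here we must handle a single form $g$ on $\F_p^{m}$, with $m = sk$ in general, whose zero set is only the origin. Let $g$ be the norm form $\mathrm{N}_{\F_{p^{m}}/\F_p}$ restricted to the degree-$k$ part. This does not work directly, since $\mathrm{N}_{\F_{p^m}/\F_p}$ has degree $m$. Instead we use $s$ separate norm forms on disjoint blocks of variables.

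Split the coordinates $x_1,\dots,x_{sk}$ into $s$ blocks $B_1,\dots,B_s$ of size $k$. Set $f_j(x) := N(x|_{B_j})$ for $j=1,\dots,s$, and let $S=\subspan_{\F_p}(f_1,\dots,f_s)$. The remaining $n-sk$ coordinates are left free.

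The checks are as follows.
\begin{itemize}
\item The $f_j$ involve disjoint sets of variables and are nonzero, so they are linearly independent. Hence $\dim S=s$.
\item Every nonzero element $\sum_j c_jf_j$ is a nonzero linear combination of degree-$k$ forms. Because the supports of the $f_j$ are disjoint monomial sets, the combination is homogeneous of degree exactly $k$.
\item $V(S)=V(f_1,\dots,f_s)$ consists of the points whose coordinates in every block $B_j$ vanish, with the last $n-sk$ coordinates arbitrary. Therefore $\abs{V(S)}=p^{n-sk}$.
\end{itemize}

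The only genuine obstacle is verifying that the norm polynomial has $\F_p$ coefficients and degree exactly $k$. We would handle this by the standard Galois-invariance argument: Frobenius permutes the factors $\sum_i y_i\omega_i^{p^j}$, so each coefficient is fixed by Frobenius. Moreover the product of the $k$ linear factors is homogeneous of degree $k$ and is nonzero, since $N(1,0,\dots,0)=\mathrm{N}(\omega_1)\neq 0$. Alternatively, we could cite the Chevalley--Warning tightness example from \cite{ln}, but the explicit construction above is self-contained.
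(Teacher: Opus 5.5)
Your proposal is correct and is essentially the paper's proof: the norm form of $\F_{p^k}/\F_p$ written in an $\F_p$-basis is a degree-$k$ form whose only zero over $\F_p$ is the origin. Placing $s$ copies of it on disjoint blocks of $k$ variables, with the remaining $n-sk$ coordinates free, gives $\dim S = s$, homogeneity, and $|V(S)| = p^{n-sk}$. The paragraph about first treating $s=1$ with a form on $\F_p^{sk}$ is an abandoned false start and should be deleted.
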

\begin{proof}
    We claim that it suffices to identify a degree $k$ polynomial $f: \F_p^k \to \F_p$ which vanishes only at zero. Indeed, then we can let $S$ be the subspace spanned by polynomials $f_i$ for $i=1, \ldots, s$, where $f_i$ acts as $f$ on the variables $x_{(i-1)k+1}, \ldots, x_{ik}$ and is independent of the remaining $n-k$ variables. Then a point $x\in \F_p^n$ lies in $V(S)$ if and only if its first $sk$ coordinates are zero. 

    The polynomial $f$ is constructed as a norm form. Let $q=p^k$ and let $\alpha_1, \ldots, \alpha_k$ be a basis for $\F_q$ over $\F_p$. Then let 
    \[ f(x_1,\ldots, x_k) = \prod_{i=1}^k \sigma_i\big( x_1\alpha_1 + \cdots + x_k\alpha_k \big) = \prod_{i=1}^k \big( x_1\sigma_i(\alpha_1) + \cdots + x_k\sigma_i(\alpha_k) \big),\]
    where $\sigma_1$ is the $p$th power map (the Frobenius automorphism), and $\sigma_i=\sigma_1^i$ ($i$-fold composition, so $\sigma_i$ is the $p^i$th power map). Then, since $f$ is fixed by the Frobenius automorphism, we have that indeed $f(x_1,\ldots, x_k)\in \F_p$ and it is clear that $f$ has degree $k$. Furthermore, since $\{\alpha_i\}$ is a basis and each $\sigma_i$ is an automorphism of $\F_q$, we see that $f(x_1,\ldots, x_k)=0$ only if $x_1 = \cdots = x_k = 0$.
\end{proof}

Finally, we will need the triangle inequality for analytic rank \cite{lovetriangle}.

\begin{lemma}[Triangle inequality]\label{lem:triangle}
    For two polynomials $f,g \in \F_p[x_1,\ldots,x_n]_{\leq k}$ we have that
    $$
    \ark(f + g) \le \ark(f) + \ark(g)
    $$
\end{lemma}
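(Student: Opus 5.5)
The plan is to reduce the statement to the corresponding inequality for symmetric $k$-linear forms. That inequality is the content of \cite{lovetriangle}, and I would reprove it as a rank-type inequality for tensors. Since $\ark(f)$ is defined as $\ark(\phi_k(f))$ and the multilinearisation map $\phi_k$ is linear, we have $\phi_k(f+g)=\phi_k(f)+\phi_k(g)$. It therefore suffices to show that for any two $k$-linear forms $S,T$ on $(\F_p^n)^k$,
\[ \mathrm{bias}(S+T)\ \ge\ \mathrm{bias}(S)\,\mathrm{bias}(T). \]
Taking $-\log_p$ of both sides then gives $\ark(S+T)\le \ark(S)+\ark(T)$.

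The first step is to rewrite the bias combinatorially. For a $k$-linear form $T$ and fixed $x=(x_1,\dots,x_{k-1})$, the map $y\mapsto T(x,y)$ is a linear functional in $y$. Averaging $e_p$ over $y$ gives $1$ if this functional vanishes and $0$ otherwise. Hence
\[ \mathrm{bias}(T)=\PP_{x\in(\F_p^n)^{k-1}}\bigl[T(x,\cdot)\equiv 0\bigr]=p^{-(k-1)n}\abs{Z_T}, \]
where $Z_T$ is the set of such $x$. In particular the bias is a nonnegative real, so no absolute values are lost. Clearly $Z_S\cap Z_T\subseteq Z_{S+T}$. The claim is therefore implied by the correlation inequality
\[ \PP_x[x\in Z_S\cap Z_T]\ \ge\ \PP_x[x\in Z_S]\,\PP_x[x\in Z_T]. \]

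The main obstacle is this correlation inequality, and I would prove it by induction on $k$ using the subspace structure. For $k=2$, $Z_S$ and $Z_T$ are linear subspaces $U,W\le\F_p^n$. The inequality $\abs{U\cap W}\ge \abs{U}\abs{W}/p^n$ follows from $\dim(U\cap W)\ge \dim U+\dim W-n$.

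For general $k$, I would fix $x_1,\dots,x_{k-2}$ and let $U(x'),W(x')$ be the subspaces of admissible $x_{k-1}$, where $x'=(x_1,\dots,x_{k-2})$. The dimension bound gives $\abs{U\cap W}\ge \abs{U}\abs{W}/p^n$ pointwise in $x'$. What remains is to show that $\EE_{x'}\bigl[\abs{U(x')}\abs{W(x')}\bigr]\ge \EE_{x'}\abs{U(x')}\cdot\EE_{x'}\abs{W(x')}$, that is, that the two sizes are positively correlated. I would approach this by rewriting $\abs{U(x')}=p^n\,\EE_y\,\mathbf 1[T(x',y,\cdot)\equiv 0]$. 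This expresses each size as a bias-type average of a $(k-1)$-linear object in $x'$ with an extra averaged variable. The aim is to run a strengthened induction hypothesis, stated for the joint zero sets of families of multilinear forms, to get the needed monotone correlation. If this induction proves awkward, I would fall back on citing \cite{lovetriangle} for the tensor inequality, since the reduction above is the only part specific to polynomials.
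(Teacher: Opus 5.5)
Your reduction (linearity of $\phi_k$, so that the claim becomes $\mathrm{bias}(S+T)\ge \mathrm{bias}(S)\,\mathrm{bias}(T)$ for $k$-linear forms), followed by an appeal to \cite{lovetriangle}, is exactly the paper's proof, so with your stated fallback the argument is complete. Your optional self-contained route also closes without any strengthened hypothesis: for $T_y$ the $(k-1)$-linear form obtained by freezing the $(k-1)$st argument of $T$ at $y$, taking independent $y,y'$ gives $\EE_{x'}\abs{U(x')}\abs{W(x')}=p^{2n}\,\EE_{y,y'}\PP_{x'}\bigl[x'\in Z_{T_y}\cap Z_{S_{y'}}\bigr]$, so the same correlation inequality applied to the pair $(T_y,S_{y'})$, together with $\EE_y\PP_{x'}[x'\in Z_{T_y}]=p^{-n}\EE_{x'}\abs{U(x')}$, completes the induction on $k$.
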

\begin{proof}
    Recalling that $\phi_k$ is linear and that $\ark(f) := \ark(\phi_k(f))$, the result follows from the triangle inequality for the analytic rank of multilinear forms, due to Lovett \cite{lovetriangle}.
\end{proof}

\section{Relative results}\label{s:relative}
The threshold for piercing $\cF_{k,s}$ as in Theorem \ref{thm:mainthresh} is of course the same as for the collection of varieties generated by subspaces of polynomials of dimension at most $s$ containing  polynomials of degree at most $k$. We will work with this incarnation. 

The main result of this section Proposition \ref{prop:relativethresh} is a threshold result for the situation in which we select points uniformly at random on a fixed low rank variety $V(L)$, and wish to pierce the set of varieties which are of high rank \textit{relative to $V(L)$} in a sense defined shortly. We provide relevant definitions and lemmas in Subsection \ref{ss:rel-decomp}, and then state and prove Proposition \ref{prop:relativethresh} in Subsection \ref{ss:rel-thresh}.

Throughout this section, all polynomials may be assumed to be of degree at most $k$. Furthermore, when we refer to the analytic rank of a polynomial, we refer to that of its $k$-fold multilinearisation.

\subsection{Structural decomposition, relative density}\label{ss:rel-decomp}

We begin with a way to decompose an $\F_p$-vector space of polynomials into high rank and low rank components.

Let $r>0$. Given a subspace $L \leq \F_p[x_1,\ldots, x_n]_{\leq k}$, we will say that $L$ is \textit{$r$-low rank} if it possesses a basis $\cB_L$ of elements of analytic rank at most $r$, that is,
\[\max_{f \in \cB_L} \ark(f) \le r.\] Note by the triangle inequality for analytic rank (Lemma \ref{lem:triangle}), if  $L$ has dimension $\ell$ and is $r$-low rank, then 
\[\max_{f \in L} \ark(f) \le \ell r.\]
Note also that the set of $r$-low rank subspaces of $\F_p[x_1,\ldots, x_n]_{\leq k}$ is closed under summation. Therefore, in any vector subspace $S\leq \F_p[x_1,\ldots,x_n]_{\leq k}$, we have that there exists a unique maximal $r$-low rank subspace $L_{\max}$, and 
\[ L_{\max} = \sum_{r\text{-low rank } L \leq S} L.\]

In contrast, letting $S \leq \F_p[x_1,\ldots,x_n]_{\leq k}$ be a subspace, we will say that a subspace $H \leq \F_p[x_1,\ldots,x_n]_{\leq k}$ is \textit{$r$-high rank relative to $S$} if $\ark(f+g) > r $ for all nonzero $f \in H$ and all $g \in S$. Note that then $\min_{0 \neq f \in H} \ark(f) > r$ and $H \cap S = \set{0}$. Furthermore, if $H'\leq \F_p[x_1,\ldots,x_n]_{\leq k}$ satisfies $H' \cap S = \set{0}$, then $H'$ is $r$-high rank relative to $S$ if and only if  
\[\min_{f\in (H'\oplus S)\backslash S} \ark (f) > r.\]

If the $r$ to which we are referring is clear from context, we may just say ``low rank'' or ``high rank''.

\begin{lemma}[Decomposition]\label{l:decomp}
    Let $S\leq \F_p[x_1,\ldots,x_n]_{\leq k}$ be a subspace of polynomials. Then, for any $r>0$, we may decompose 
    \[S = H \oplus L,\]
    where $L$ is $r$-low rank, $H\leq \F_p[x_1,\ldots,x_n]_{\leq k}$ is $r$-high rank relative to $L$, and furthermore $L$ contains every $r$-low rank subspace in $S$.
\end{lemma}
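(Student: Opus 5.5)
Take $L := L_{\max}$, the unique maximal $r$-low rank subspace of $S$. As noted just before the statement, this is the sum of all $r$-low rank subspaces of $S$, and it is itself $r$-low rank because $r$-low rank subspaces are closed under summation. So $L$ is $r$-low rank and contains every $r$-low rank subspace of $S$ by construction. Then let $H$ be any complement of $L$ in $S$, so that $S = H \oplus L$ with $H \cap L = \set{0}$.

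It remains to check that $H$ is $r$-high rank relative to $L$. That is, we need $\ark(f+g) > r$ for every nonzero $f \in H$ and every $g \in L$. We argue by contradiction.

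Suppose $f \in H\setminus\set{0}$ and $g\in L$ satisfy $\ark(f+g)\le r$. Put $h := f+g$, which lies in $S$.

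Since $f \neq 0$ and $H\cap L = \set{0}$, we have $f\notin L$, and hence $h \notin L$.

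Now consider $L' := L + \subspan_{\F_p}(h) \le S$. Take a basis $\cB_L$ of $L$ whose elements all have analytic rank at most $r$. Then $\cB_L \cup \set{h}$ is a basis of $L'$, since it is linearly independent because $h\notin L$. Every element of this basis has analytic rank at most $r$, so $L'$ is an $r$-low rank subspace of $S$.

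By maximality of $L$, this forces $L' \le L$, so $h \in L$, a contradiction. Hence $\ark(f+g) > r$, as required.

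The only genuinely delicate point is the existence and uniqueness of $L_{\max}$. This rests on the closure of $r$-low rank subspaces under sums: concatenate low-rank bases, then extract a basis from the spanning set, which still consists of elements of rank at most $r$. Since $S$ is finite-dimensional, the sum of all such subspaces equals a finite sum and so is again $r$-low rank. Everything else is immediate. We do not need the triangle inequality (Lemma \ref{lem:triangle}) here, because low-rankness is defined via a basis.
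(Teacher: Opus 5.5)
Your proposal is correct and is essentially the paper's proof: both take $L=L_{\max}$ with an arbitrary complement $H$, and turn a hypothetical $f+g$ of analytic rank at most $r$ into a contradiction via maximality of $L$. The only cosmetic difference is that you adjoin $h=f+g$ to $L$ and invoke maximality. The paper instead notes directly that $\subspan_{\F_p}(f+g)$ is itself $r$-low rank, so it lies in $L$, forcing $f=(f+g)-g\in L$.
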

\begin{proof}
Let $L=L_{\max}$, as defined above. Let $H$ be any subspace satisfying $S = H \oplus L$. Suppose there exists $g\in L$ and a nonzero $f \in H$ such that $\ark(f+g) \le r$. Then $\subspan_{\F_p}(f+g)$ is $r$-low rank and so is contained in $L$ by maximality. Thus  $f = (f+g) - g$ is also in $L$, a contradiction. The final statement of the lemma follows from the maximality of $L$.
\end{proof}

The next lemma gives  control on the density of varieties relative to that of their low rank part.

\begin{lemma}[Relative density]\label{lem:reldens} Let $L\leq \F_p[x_1,\ldots,x_n]_{\leq k}$ be a subspace of polynomials such that $V(L)$ is nonempty, and let $H\leq \F_p[x_1,\ldots,x_n]_{\leq k}$ be a subspace of polynomials which is $r$-high rank relative to $L$. Set $h:= \dim(H)$. Then
    $$
    \abs{\frac{\abs{V(H) \cap V(L)}}{\abs{V(L)}} - p^{-h}} \le \frac{p^{n-r/2^{k-1}}}{\abs{V(L)}}.
    $$ 
\end{lemma}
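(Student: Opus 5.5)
Write $V(L)$ via indicator functions and expand using additive characters. For a subspace $W$ of polynomials, the indicator of $V(W)$ is
\[ 1_{V(W)}(x) = \frac{1}{|W|}\sum_{f\in W} e_p(f(x)). \]
This holds because $\sum_{f\in W} e_p(f(x))$ equals $|W|$ if every $f\in W$ vanishes at $x$. Otherwise $f\mapsto f(x)$ is a nonzero linear functional on $W$, and the sum is zero. Since $H$ is $r$-high rank relative to $L$, we have $H\cap L=\{0\}$. Also $V(H)\cap V(L)=V(H\oplus L)$, so
\[ |V(H)\cap V(L)| = \sum_x 1_{V(H)}(x)1_{V(L)}(x) = p^{-h}\sum_{f\in H}\sum_x e_p(f(x))\,1_{V(L)}(x). \]

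The $f=0$ term gives exactly $p^{-h}|V(L)|$, which is the main term. For $f\neq 0$ in $H$, expand $1_{V(L)}$ as well:
\[ \sum_x e_p(f(x))1_{V(L)}(x) = \frac{1}{|L|}\sum_{g\in L}\sum_x e_p\big((f+g)(x)\big). \]
For each such pair, $\ark(f+g)>r$ by the relative high rank hypothesis. Lemma \ref{lem:gauss} then bounds each inner sum in absolute value by $p^{n}p^{-r/2^{k-1}}$. Averaging over $g\in L$ keeps the bound $p^{n-r/2^{k-1}}$ for each nonzero $f$.

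Summing over the $p^h-1$ nonzero $f\in H$ and multiplying by $p^{-h}$ gives an error of at most $(1-p^{-h})p^{n-r/2^{k-1}}\le p^{n-r/2^{k-1}}$. Dividing by $|V(L)|$, which is nonzero by hypothesis, yields the claimed inequality.

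There is no serious obstacle. The only points to check are that $H\cap L=\{0\}$, so the character expansion of $V(H\oplus L)$ factors as a product of the two expansions, and that every combination $f+g$ with $f\neq0$ has large analytic rank. That second point is exactly the definition of relative high rank, and it is what allows Lemma \ref{lem:gauss} to be applied uniformly to every error term.
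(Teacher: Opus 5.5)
Your proof is correct and is essentially the paper's argument. The paper expands $|V(H\oplus L)|=\sum_v \EE_{f\in H\oplus L}e_p(f(v))$ in one go, takes the terms with $f\in L$ as the main term $p^{-h}|V(L)|$, and bounds each of the remaining $p^{h+\ell}-p^{\ell}$ terms by Lemma \ref{lem:gauss} using relative high rank, which gives the same error $(1-p^{-h})p^{n-r/2^{k-1}}$. Your version writes this as the product of the expansions for $H$ and $L$, which is the same sum; it even makes the appeal to $H\cap L=\{0\}$ unnecessary, because the pairs $(f,g)$ with $f\neq 0$ are covered directly by the definition of relative high rank.
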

\begin{proof} Let $\ell:=\dim(L)$. Using the orthogonality of characters (see, for example, \cite[Lemma 3.2]{altfp}), we obtain
\begin{align*} 
    \abs{V(H \oplus L)} &= \sum_{v \in \FF_p^n} \EE_{f \in H \oplus L} e_p(f(v))\\
    &= p^{-(h+\ell)}\bigp{\sum_{v} \sum_{f \in L} e_p(f(v)) + \sum_{v} \sum_{f \in (H \oplus L) \setminus L} e_p(f(v))} \\
    &= p^{-h}\sum_{v} \EE_{f \in L} e_p(f(v)) + p^{-(h+\ell)} \sum_{v} \sum_{f \in (H \oplus L) \setminus L} e_p(f(v)).
\end{align*}
Then using the orthogonality of characters in the same way yields
\[\abs{\abs{V(H \oplus L)} - \abs{V(L)} p^{-h}} \leq p^{-(h+\ell)} \sum_{f\in (H\oplus L)\backslash L}\abs{\sum_v e_p(f(v))} \leq p^{n-r/2^{k-1}},\]
where the second inequality follows from Lemma \ref{lem:gauss} and the fact that $H$ is $r$-high rank relative to $L$. Dividing by $\abs{V(L)}$ gives the result.
\end{proof}

\subsection{Relative threshold}\label{ss:rel-thresh}

Using this we can state a quantitative, ``relative'' version of our main theorem. Here our random set is formed by selecting points uniformly at random \textit{on a low rank variety} $V$. On the other hand, the collection of varieties that we wish to pierce will be high rank relative to $V$. This relative version of our main theorem enjoys sufficiently good concentration around the mean for the second moment method to go through (whereas, as mentioned above, the same is not true of the absolute statement Theorem \ref{thm:mainthresh}). 

It is convenient to work with a slightly different random model: we choose the set $M$ by sampling $j$ elements of $\F_p^n$ uniformly and independently \textit{with replacement}. One may easily recover the analogous statement for the random model in which $M$ is selected uniformly at random among sets of size $|M|$.  

We note that in the following proposition, and henceforth in the paper, we may take $L=\{0\}$, and all statements go through.

\begin{proposition}[Relative threshold]\label{prop:relativethresh}
     Let $h\geq 1$, $k\geq 2$, $\ell \ge 0$, and let $r = r(n) = k2^{k+1}\log_p n$. Let $L\leq \F_p[x_1,\ldots,x_n]_{\leq k}$ be a subspace of polynomials which has dimension $\ell$, which is $r$-low rank, and for which $V(L)$ is nonempty. Let $M \subseteq V(L)$ be  chosen by sampling $j$ elements of $V(L)$ uniformly and independently with replacement. Let $\cG_{k,h, L}$ be the collection of varieties $V(H)$ generated by subspaces $H\leq \F_p[x_1,\ldots,x_n]_{\leq k}$ with $\dim(H) = h$ and such that $H$ is $r$-high rank relative to $L$. Then letting
    $$
    t_h \coloneq \frac{\log(p^h)}{\log(1+(p^h-1)^{-1})},
    $$
    we have for every $\eps > 0$  that if $j \geq (t_h+\eps){n \choose k}$ and $n$ is sufficiently large depending on $\eps$, $h$, $\ell$, $k$, $p$, then
    $$
    \pr{M \text{ pierces } \cG_{k,h,L}} \ge 1-\exp \bigp{-\frac{\eps}{2}\log(1+(p^h-1)^{-1}) {n \choose k}}.
    $$
    Furthermore, if $j \leq (t_h-\eps) {n \choose k}$, then
    $$
    \pr{M \text{ pierces } \cG_{k,h,L}} = o(1),
    $$
    as $n\to \infty$. 
\end{proposition}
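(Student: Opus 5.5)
Write $X$ for the number of varieties in $\cG_{k,h,L}$ that are disjoint from $M$. Since $M\subseteq V(L)$, a variety $V(H)\in\cG_{k,h,L}$ is missed exactly when no sample lands in $V(H)\cap V(L)=V(H\oplus L)$. Lemma \ref{lem:reldens} gives the relative density of this set. By Lemma \ref{lem:warning}, $\abs{V(L)}\ge p^{n-\ell k}$. Since $r=k2^{k+1}\log_p n$, we have $p^{-r/2^{k-1}}=n^{-4k}$, so
\[\frac{\abs{V(H\oplus L)}}{\abs{V(L)}}=p^{-h}+O(p^{\ell k}n^{-4k}).\]
The error is $o(n^{-k})$, hence $o(1/\binom nk)$. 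Consequently, for each fixed $H$,
\[\Pr[V(H)\cap M=\emptyset]=(1-p^{-h})^j\,e^{o(j/\binom nk)}.\]
Since $j=O(\binom nk)$ in both regimes, this is $(1-p^{-h})^j(1+o(1))$.

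\textbf{Upper bound on $j$ needed (piercing).} I use the first moment, counting varieties through the subspaces $H$ that generate them. I need $\abs{\{H\}}\le p^{h(\binom{n}{k}+O(n^{k-1}))}$, since $\dim\F_p[x]_{\le k}=\binom nk+O(n^{k-1})$. Then
\[\EE X\le \exp\Big(h\log p\,\tbinom nk+O(n^{k-1})-j\log\big(1+(p^h-1)^{-1}\big)+o(1)\Big),\]
using $1-p^{-h}=(1+(p^h-1)^{-1})^{-1}$. If $j\ge(t_h+\eps)\binom nk$, the exponent is at most $-\eps\log(1+(p^h-1)^{-1})\binom nk+O(n^{k-1})$. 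For large $n$ this is below $-\tfrac{\eps}{2}\log(1+(p^h-1)^{-1})\binom nk$, and Markov's inequality gives the claim.

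\textbf{Lower bound on $j$ (failure to pierce).} I use the second moment method on $X$, restricted to a convenient subfamily. First I need a lower bound on $\abs{\cG_{k,h,L}}$. Lemma \ref{lem:lowrankcount} shows the polynomials of analytic rank at most $hr+\ell r$ number $p^{O(n^{k-1}r\log r)}=p^{o(n^k)}$. So all but a negligible fraction of $h$-tuples $(f_1,\dots,f_h)$ have every nonzero element of $\subspan(f_i)\oplus L$ outside $L$ of rank greater than $r$; there are only $p^{h+\ell}$ combinations per tuple, and I need to rule out low-rank $f+g$ for each. Hence there are $p^{h\binom nk(1-o(1))}$ such subspaces $H$. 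Varieties versus subspaces is also an issue: distinct $H$ could give the same $V(H)$. I will sidestep this by letting $X$ count subspaces $H$ with $V(H\oplus L)\cap M=\emptyset$; the event "$M$ does not pierce" is exactly $X>0$.

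For the variance I compute $\Pr[\text{both }H,H'\text{ missed}]=(1-\mu(H\cup H'))^j$, where
\[\mu(H\cup H')=\frac{\abs{V(H\oplus L)\cup V(H'\oplus L)}}{\abs{V(L)}}.\]
If $H\cap H'=\{0\}$ and $H+H'$ is still $r$-high rank relative to $L$, Lemma \ref{lem:reldens} applied to $H+H'$ (dimension $2h$) gives $\mu=2p^{-h}-p^{-2h}+o(n^{-k})$. Then the joint probability is $(1-p^{-h})^{2j}(1+o(1))$, matching the square of the mean. The remaining pairs are those where $H+H'$ has dimension $2h-d$ with $d\ge1$, or fails the high rank condition relative to $L$. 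I bound them crudely. The number of such pairs is at most $p^{(2h-d)\binom nk(1+o(1))}$, or $p^{(2h)\binom nk-\omega(1)\cdot\binom nk}$-type savings via Lemma \ref{lem:lowrankcount} in the low-rank case. Each such pair contributes at most the single-miss probability $(1-p^{-h})^j$, or better.

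Then $\EE X^2/(\EE X)^2\to1$ provided $j\le(t_h-\eps)\binom nk$. In that regime $\EE X\ge p^{\eps'\binom nk}$, which absorbs the overlap terms: the $d\ge1$ terms contribute relative size at most $p^{-d\binom nk}(1-p^{-h})^{-j}\cdot p^{o(n^k)}$. Chebyshev's inequality then gives $X>0$ with high probability.

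\textbf{Main obstacle.} The delicate step is the overlap analysis. I must show that overlapping pairs ($d\ge1$) are handled by the counting, which requires $p^{-d\binom nk}(1-p^{-h})^{-j}\to0$. This holds for $d\ge h$, but for $1\le d<h$ it needs care. For these pairs I would instead estimate $\mu(H\cup H')$ via Lemma \ref{lem:reldens} on $H\cap H'$ and $H+H'$, getting $\mu\approx 2p^{-h}-p^{-(2h-d)}$. I would then check that
\[\big(p^{2h-d}\big)^{\binom nk}(1-2p^{-h}+p^{-(2h-d)})^j\ll(\EE X)^2\]
for $j<t_h\binom nk$, which amounts to a convexity inequality in $d$ that I expect to hold below the threshold.
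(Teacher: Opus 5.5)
Your first-moment direction, your lower bound on the number of admissible $H$, and your treatment of the pairs with $H\cap H'=\{0\}$ and $H+H'$ $r$-high rank relative to $L$ are all sound and match the paper. (In the piercing direction $j$ is not $O(\binom nk)$, but monotonicity lets you reduce to $j=\lceil (t_h+\eps)\binom nk\rceil$.)

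\textbf{The gap.} It lies in the pairs for which $H+H'$ is \emph{not} $r$-high rank relative to $L$. Your claim that these have a saving of $p^{-\omega(1)\binom nk}$ in the count is false. Fix $H$, a nonzero $f\in H$, and a fixed nonzero $q$ of bounded analytic rank. Every $H'$ containing $-f+q$ gives such a pair, and generically there are $p^{(h-1)(1-o(1))\binom nk}$ admissible $H'$ of this form. So these pairs number $p^{(2h-1)(1-o(1))\binom nk}$.

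With only the single-miss bound $(1-p^{-h}+o(1))^j$, their contribution to $\EE X^2/(\EE X)^2$ is bounded by $p^{-(1-o(1))\binom nk}(1-p^{-h})^{-j}$. At $j=(t_h-\eps)\binom nk$ this is $p^{(h-1-O(\eps)+o(1))\binom nk}\to\infty$ once $h\ge 2$.

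The refinement you propose for $1\le d<h$ cannot be used here. It needs Lemma \ref{lem:reldens} applied to $H+H'$ (inclusion--exclusion needs $\PP_L(v\in V(H+H'))$; $H\cap H'$ plays no role). That lemma requires $H+H'$ to be $r$-high rank relative to $L$, which is exactly what fails. The same failure can occur in your dimension-drop case.

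\textbf{The missing idea.} Use Lemma \ref{l:decomp} to write $H+H'+L=\widetilde H\oplus\widetilde L$, where $\widetilde L\supseteq L$ is the maximal $r$-low rank subspace. Stratify pairs by $\tilde h=\dim\widetilde H$, which records loss of dimension and loss of rank at the same time.

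\emph{Count.} Pairs with a given $\tilde h$ number at most $p^{\tilde h(1+o(1))\binom nk}$: there are $p^{o(n^k)}$ choices of $\widetilde L$ by Lemma \ref{lem:lowrankcount}, and $O(1)$ pairs for each sum.

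\emph{Probability.} Since $V(\widetilde L)\subseteq V(L)$,
\[\PP_L(v\in V(H+H'))\le \frac{|V(\widetilde H\oplus\widetilde L)|}{|V(\widetilde L)|}\le p^{-\tilde h}+o(1).\]
This uses Lemma \ref{lem:reldens} relative to $\widetilde L$, together with Lemma \ref{lem:warning} for $\widetilde L$, whose dimension is at most $2h+\ell$. Combined with the trivial bound, the probability of missing both is at most $1-2p^{-h}+p^{-\max(h,\tilde h)}+o(1)$.

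\emph{The inequality.} You must actually prove the inequality you only ``expect''. Put $\beta=\tilde h/h$, $x=p^{-h}$ and $j\le(1-\delta)t_h\binom nk$. You need
\[\beta-2\delta-(1-\delta)\frac{\log(1-2x+x^{\max(1,\beta)})}{\log(1-x)}<0\quad\text{for }\beta\in[0,2-1/h].\]
This holds: the left side is increasing on $[0,1]$, and on $[1,2]$ it is convex with values $-\delta$ at $\beta=1$ and $0$ at $\beta=2$.
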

\begin{proof}
    Let $\PP_L$ be the uniform measure over $V(L)$, let $j$ be a positive integer and let $M_j$ be the set formed after $j$ uniformly random selections in $V(L)$. 
    Let 
    \[X_j := \sum_{H \text{ s.t. } V(H) \in \cG_{k,h,L}} 1_{M_j \cap V(H) = \emptyset},\]
    that is, $X_j$ is the number of $H\leq \F_p[x_1,\ldots,x_n]_{\leq k}$ such that $V(H)$ is disjoint from $M=M_j$. Note that even though $X_j$ may overcount the number of varieties disjoint from $M$, we still have that $X_j = 0$ if and only if $M$ pierces $\cG_{k,h,L}$.
    
    \vspace{0.4cm}
    \textit{Piercing with high probability, quantitatively.}
    We will begin with the direction that suitably large $j$ implies a large probability of piercing. Since there are $p^{n+k \choose k} = p^{(1+o(1)){n \choose k}}$ polynomials with degree at most $k$, we have by choosing a basis for $H$ that the number of $H$ with $\dim(H) = h$ is at most $p^{(h+o(1)){n \choose k}}$. Invoking the Chevalley--Warning theorem (Lemma \ref{lem:warning}), we have that 
    \[\abs{V(L)} \geq p^{n-k \ell}.\]
    Thus, we may use Lemma \ref{lem:reldens} to bound
    \begin{equation}\label{eq:cw-lazy}
        \PP_L(v \in V(H)) \geq p^{-h} - p^{k\ell -r/2^{k-1}}\geq p^{-h} - o(1).
    \end{equation}
    We can therefore compute
    \begin{align*}
    \log \EE X_j &= \log \bigp{\sum_{H: V(H) \in \cG_{k,h,L}} (1-\PP_L(v \in V(H)))^j}\\
    &\le \log \bigp{\sum_{H} (1-p^{-h} + o(1))^j}\\
    &\le \log (p^h) (1+o(1)) {n \choose k} + j\log\bigp{1-p^{-h} + o(1)}.
\end{align*}
Noting that 
\[t_h = \frac{-\log(p^h)}{\log(1-p^{-h})},\]   
we see that if $j \geq (t_h+\eps){n \choose k}$ then 
\[\log\EE X_j \le \bigg( o(1) -\eps \log\bigp{1+(p^h-1)^{-1}}(1+o(1)) \bigg){n \choose k}.\]
The first direction then follows by taking $n$ suitably large in terms of all other parameters and invoking Markov's inequality.

\vspace{0.4cm} \textit{Not piercing with high probability.}
For the other direction, we wish to show that 
\[ \PP(X_j = 0) = o(1),\]
when $j \le (t_h - \eps){n \choose k}$. By the Paley--Zygmund inequality, it suffices to show that 
\begin{equation}\label{eq:ratio}
    \frac{\EE X_j^2}{\bigp{\EE X_j}^2} \le 1+o(1).
\end{equation}

We begin by determining a lower bound on $\EE(X_j)$. First note that the number of subspaces of polynomials of degree at most $k$ which have dimension $h$ is at least $p^{h(1-o(1))\binom{n}{k}}$. We wish to upper bound the number of those which are not $r$-high rank relative to $L$. If $H$ is not $r$-high rank relative to $L$ then either $H\cap L \ne \{0\}$, or there is $f\in (H+L)\backslash L$ with $\ark(f)\leq r$. In the latter case, say $f = f_H + f_L$, where $f_H\ne 0$. Then by the triangle inequality (Lemma \ref{lem:triangle}), $\ark(f_H) \leq  (\ell+1) r$. In the former case, it is clearly the case  that there exists $f_H\in H$ with $\ark(f_H) \leq \ell r$. Thus, choosing a basis for $H$, the number of choices for $H$ is at most the number of choices of $h$ polynomials, at least one of which has analytic rank at most $(\ell+1) r$. By Lemma \ref{lem:lowrankcount}, there are at most $p^{(h-1)(1+o(1))\binom{n}{k}}$ such choices. 
Thus, invoking Lemmas \ref{lem:reldens} and \ref{lem:gauss} in the second line below to estimate $\PP_L\bigp{v \in V(H)}$ (where we have from definitions that $H$ is $r$-high rank relative to $L$), we get
\begin{align}
\EE(X_j) &= \sum_{H: V(H)\in \cG_{k,h,L}} \bigp{1-\PP_L\bigp{v \in V(H)}}^j \nonumber\\
&\ge \bigp{p^{(h-o(1)){n \choose k}}-p^{(h-1+o(1)){n \choose k}}}\bigp{1-p^{-h} - o(1)}^j \nonumber \\
&= (1-o(1))p^{(h+o(1)){n \choose k}}(1-p^{-h} - o(1))^j. \label{eq:ex-lower}
\end{align}

To show \eqref{eq:ratio} we must of course also upper bound the quantity
\begin{equation}\label{eq:2nd-mom}
\EE(X_j^2) = \sum_{H_1,H_2: V(H_1),V(H_2) \in \cG_{k,h,L}}\PP_L(v \notin V(H_1) \cap  v \notin V(H_2))^j.
\end{equation}
We will stratify the analysis by the dimension of the high rank component in a high rank-low rank decomposition of the vector space $H_1 + H_2 + L$. Given $H_1, H_2$, invoke Lemma \ref{l:decomp} to obtain 
\begin{equation}\label{eq:iter-decomp}
    H_1 + H_2 + L = \wt H \oplus \wt L,
\end{equation}
where $\wt L$ is $r$-low rank and $\wt H$ is $r$-high rank relative to $\wt L$. Let $\wt h := \dim \wt H$ and $\wt \ell := \dim \wt L$. We have 
\begin{equation}\label{eq:dim-ineqs}
\wt \ell \geq \ell, \qquad \wt h + \wt \ell \leq 2h + \ell,
\end{equation}
where the first inequality is provided by the output of Lemma \ref{l:decomp}, and the second is immediate from \eqref{eq:iter-decomp}. Our casework will be based upon the value of $\wt h$.

\vspace{0.4cm} \textit{The main contribution.}
For appropriate $j$, the main contribution to \eqref{eq:ratio} will come from the case in which $\wt h = 2h$. If $(H_1,H_2)$ is such that the output
of Lemma \ref{l:decomp} yields $\wt h = 2h$ then we will call $(H_1,H_2)$ \textit{approximately independent}, and denote this by $H_1 \perp H_2$.  We will deal with this first, demonstrating a contribution of $1+o(1)$ to \eqref{eq:ratio}, and then later show that the remaining pairs $(H_1,H_2)$ contribute negligibly.

Note that if $\wt h = 2h$, then from the  inequalities \eqref{eq:dim-ineqs} we see that $\ell \leq \wt \ell \leq \ell$, that is, $\wt \ell = \ell$. Thus, invoking the output of Lemma \ref{l:decomp} which gives $\wt L \geq L$, we in fact have that $\wt L = L$.  Ultimately we can conclude that if $\wt h = 2h$ then 
\[ \wt H \oplus L = \wt H \oplus \wt L = H_1 + H_2 + L \cong H_1 \oplus H_2 \oplus L,\]
where the isomorphism follows from the fact that we have proven $\wt h + \wt \ell = 2h + \ell$. Therefore, if $f\in (H_1 \oplus H_2 \oplus L)\backslash L$ then $f \in  (\wt H \oplus \wt L)\backslash \wt L$, so $\ark(f) \geq r$. Then by definition, $H_1 \oplus H_2$ is $r$-high rank relative to $L$.

With this observation, we may now bound the terms corresponding to such $(H_1,H_2)$ that arise in \eqref{eq:2nd-mom}. For such pairs, invoking Lemma \ref{lem:reldens} for the first inequality and subsequently arguing similarly to \eqref{eq:cw-lazy} for the second inequality (using that $n$ is sufficiently large in terms of other parameters),
\begin{align*}
    &\PP_L(v \in V(H_1 \oplus H_2)) - \PP_L(v \in V(H_1))\PP_L(v \in V(H_2)) \\
    &\le p^{-2h} - p^{-2h} + 4\cdot \frac{p^{n-r/2^{k-1}}}{\abs{V(L)}} \leq 4p^{-2k\log_p n} = 4n^{-2k}.
\end{align*}
  Lemma \ref{lem:reldens} of course also yields the crude bound $\PP_L\bigp{v \notin V(H_1)} = 1-p^{-h} + o(1) \ge \frac 1 2$ since $p$ is an odd prime and $h \ge 1$, and the same holds for $H_2$. Thus
\begin{align*}
    &\frac{\PP_L(v \notin V(H_1) \ \cap \ v \notin V(H_2))}{\PP_L(v \notin V(H_1))\PP_L(v \notin V(H_2))} \\
    &= 1  + \frac{\PP_L(v \in V(H_1 \oplus H_2)) - \PP_L(v \in V(H_1))\PP_L(v \in V(H_2))}{\PP_L(v \notin V(H_1))\PP_L(v \notin V(H_2))} \\
    &\le 1+16n^{-2k}.
\end{align*}
So then
\begin{align*}
    &\frac{\sum_{H_1\perp H_2}\PP_L(v \notin V(H_1)\ \cap \ v \notin V(H_2))^j}{(\EE X_j)^2}\\
    &= \frac{\sum_{H_1\perp H_2} (\PP_L(v \notin V(H_1))\PP_L(v \notin V(H_2)))^j \left( \frac{\PP_L(v \notin V(H_1) \ \cap \ v \notin V(H_2))}{\PP_L(v \notin V(H_1))\PP_L(v \notin V(H_2))}\right)^j}{\sum_{H_1,H_2}(\PP_L(v \notin V(H_1))\PP_L(v \notin V(H_2)))^j} \\
    &\leq (1 + 16n^{-2k})^j.
\end{align*}
When $j=O(n^k)$ we have that this is at most $1+o(1)$, completing the argument for the situation in which the pair $(H_1, H_2)$ is approximately independent.

\vspace{0.4cm} \textit{The error terms.} In the remaining cases we have that $H_1$ and $H_2$ are not approximately independent. We will show the contribution of all such pairs to \eqref{eq:ratio} is $o(1)$ for appropriate $j$. Thus from \eqref{eq:dim-ineqs} we henceforth have $0\leq  \wt h \leq  2h-1$; fix such $\wt h$ and let $\cH_{\wt h}$ be the collection of pairs $(H_1,H_2)$ which yield this value of $\wt h$ under Lemma~\ref{l:decomp}.

We claim in this paragraph that $|\cH_{\wt h}|$ is at most $p^{\wt h(1+o(1))\binom{n}{k}}$. Firstly, the number of polynomials of degree at most $k$ in $\F_p[x_1,\ldots, x_n]$ is  $p^{\binom{n}{k}(1+o(1))}$. Next, by choosing a basis for $\wt H$, there are at most $p^{\wt h(1+o(1)) \binom{n}{k}}$ choices for $\wt H$. Furthermore, since $\wt L$ is $r$-low rank, it by definition possesses a basis of polynomials of analytic rank at most $r\ll_{p,k} \log n$. By Lemma~\ref{lem:lowrankcount}, there are $p^{O_{p,k}(n^{k-1}\log^2 n)}$ such polynomials. Thus the number of choices for $\wt H \oplus \wt L$ is at most $p^{\wt h(1+o(1)) \binom{n}{k}}$. Given $\wt H \oplus \wt L$, the number of choices for $(H_1,H_2)$ such that $H_1 + H_2 + L = \wt H \oplus \wt L$ is of course $\ll_{p,h,\ell} 1$.

Next we obtain a uniform bound on the contribution to \eqref{eq:2nd-mom} of those $(H_1,H_2)\in \cH_{\wt h}$. We have that 
\[\PP_L(v \notin V(H_1) \cap v \notin V(H_2)) = 1- \PP_L(v \in V(H_1)) - \PP_L(v \in V(H_2)) + \PP_L(v \in V(H_1 + H_2));\]
we now pursue an upper bound on $\PP_L(v \in V(H_1 + H_2))$. To this end note that if $\PP_L(v \in V(H_1 + H_2)) \ne 0$ then $V(\wt L)\ne \emptyset$ and so
\begin{align*}
\PP_L(v \in V(H_1 + H_2)) &= \frac{|V(H_1 + H_2 + L)|}{|V(L)|}\\
&= \frac{|V(\wt H \oplus \wt L)|}{|V(L)|} \\
&\leq \frac{|V(\wt H \oplus \wt L)|}{|V(\wt L)|},
\end{align*}
where the inequality holds since Lemma~\ref{l:decomp} produces $\wt L \geq L$. We are now in a position to invoke our relative density lemma, Lemma \ref{lem:reldens}, which yields, again under the assumption that $\PP_L(v \in V(H_1 + H_2)) \ne 0$,
\begin{equation}\label{eq:h1h2-prob}
\PP_L(v \in V(H_1 + H_2)) \leq p^{-\wt h} + p^{n-r/2^{k-1}}|V(\wt L)|^{-1} \leq p^{-\wt h} + p^{k(2h+\ell)-r/2^{k-1}} = p^{-\wt h} + o(1),
\end{equation}
where the second inequality is the Chevalley--Warning theorem (Lemma \ref{lem:warning}). Of course the bound in \eqref{eq:h1h2-prob} also holds in the situation that $\PP_L(v \in V(H_1 + H_2)) = 0$. For $(H_1,H_2)\in \cH_{\wt h}$, using Lemmas \ref{lem:warning} and \ref{lem:reldens} again in the same way we may conclude
\begin{align}\label{eq:prelim-bound}
    \PP_L(v \notin V(H_1) \cap v \notin V(H_2)) &= 1- \PP_L(v \in V(H_1)) - \PP_L(v \in V(H_2)) + \PP_L(v \in V(H_1 + H_2)) \nonumber \\
    &\le 1-2p^{-h} + p^{-\wt h} + o(1).
\end{align}
We also have the trivial bound 
\[ \PP_L(v \notin V(H_1) \cap v \notin V(H_2))\leq 1- \PP_L(v\in V(H_1)),\]
which, invoking Lemmas \ref{lem:warning} and \ref{lem:reldens} once again, we see is of size $1-p^{-h} + o(1)$. Writing this as $1-2p^{-h} + p^{-h} + o(1)$ and combining with \eqref{eq:prelim-bound} we have 
\[ \PP_L(v \notin V(H_1) \cap v \notin V(H_2)) \leq 1- 2p^{-h} + p^{-\max(h,\wt h)} + o(1).\]

The previous two paragraphs together with the lower bound on $\EE(X_j)$ in \eqref{eq:ex-lower} show that the contribution of $\cH_{\wt h}$ to \eqref{eq:ratio} is bounded above by
\[ \ll \frac{p^{(\wt h+o(1))\binom{n}{k}}\left(1-2p^{-h} + p^{-\max(h,\wt h)} + o(1)\right)^j}{p^{(2h-o(1))\binom{n}{k}}(1-p^{-h} - o(1))^{2j}}.\]
By summing this contribution over $0\leq \wt h \leq  2h-1 $, it suffices to show that the value of the above expression is of size $o(1)$ for all $\wt h$ in this range. To this end, the logarithm of the above expression is
\[ (\wt h-2h +o(1))(\log p) {n \choose k} + j\log \left( 1-2p^{-h}+p^{-\max(h,\wt h)}\right)(1+o(1)) - 2j\log(1-p^{-h})(1+o(1)).\]
Setting \[j\leq (1-\delta)t_h\binom{n}{k} = (1-\delta)\frac{\log (p^h)}{-\log(1-p^{-h})}\binom{n}{k}\]
yields the following upper bound for the previous expression:
\[\left( \wt h-2\delta h- (1-\delta)h \cdot \frac{\log(1- 2p^{-h} + p^{-\max(h,\wt h)})}{\log(1-p^{-h})}+o(1)\right)(\log p) {n \choose k}.\]
Dividing by $h\cdot (\log p)\binom{n}{k}$ and setting $\beta = \wt h/h$, $x=p^{-h}$, we are left with the task of verifying that the following expression is negative for all $\beta \in [0,2-1/h]$, $x\in(0,1/3]$ and all $\delta \in (0,1)$:
\begin{equation}\label{eq:Phi}\beta - 2\delta - (1-\delta)\frac{\log(1- 2x + x^{\max(1,\beta)})}{\log(1-x)} =: \Phi(x,\beta,\delta).\end{equation}
Clearly $\Phi(x,\beta,\delta)$ is increasing in $\beta$ when $\beta \in [0,1]$, so it suffices to prove that $\Phi$ is negative in the restricted domain $\beta\in[1,2-1/h]$. Now, for all $x,\delta$ in the relevant domain and $\beta \in [1,2-1/h]$, differentiating twice gives that \[\frac{d^2}{d\beta^2} \Phi(x,\beta,\delta) = -(1-\delta) \frac{x^\beta (1-2x)(\log x)^2}{\log(1-x)\cdot (1-2x+x^\beta)^2}>0.\] Also, 
\[\Phi(x,1,\delta) = -\delta, \qquad \Phi(x,2,\delta) = 0,\]
for all appropriate $x,\delta$. This allows us to conclude that $\Phi(x,\beta,\delta)<0$ on the relevant domain, completing the proof that the pairs $(H_1, H_2)$ which are not approximately independent contribute $o(1)$ to \eqref{eq:ratio}, and thus completing the proof of the proposition. 
\end{proof}

\begin{remark} \label{rem:homog}
We note that the previous proof for failing to pierce $\cG_{k,h,L}$ with high probability yields the slightly stronger statement that sampling the same number of points asymptotically almost surely fails to intersect a variety generated by a subspace of dimension $h$ consisting of \textit{homogeneous} polynomials of degree \textit{exactly} $k$ that are high rank relative to $L$. We will use this stronger statement for the application to the random Szemer\'edi problem in Section \ref{sec:rszproof}.  
\end{remark}

\section{Proof of Theorem \ref{thm:mainthresh}}\label{s:main-proof}

\subsection{Thresholds for sub-collections}

Throughout this subsection we will take \\$r = k2^{k+1} \log_p n$, and $S$, $H$, and $L$ will denote subspaces of $\F_p[x_1,\ldots,x_n]_{\leq k}$. We will let $M$ be the union of $j$ elements taken uniformly and independently with replacement from $\FF_p^n$. Note that this $M$ is different from the one in the statement of Theorem \ref{thm:mainthresh}; we will transfer results to the fixed size model later in the proof of Theorem \ref{thm:mainthresh}, which is contained in the next subsection.

In the case of hypersurfaces, i.e. $s=1$, the generating polynomial is either high rank in which case Proposition \ref{prop:relativethresh} applies with $L=\{0\}$, or low rank, in which case there are sufficiently few such hypersurfaces. For $s > 1$ this dichotomy no longer holds. In the previous section we proved a threshold result for the piercing of varieties which are high rank relative to a fixed low rank variety, within which we sampled. We now transfer this to a threshold result for sampling within all of $\F_p^n$ and piercing all varieties with a fixed low rank component.

\begin{lemma}[Threshold for fixed low-rank component]\label{lem:chernoffthresh}
    Fix an $r$-low rank subspace $L$ consisting of polynomials of degree at most $k$ and such that $V(L)$ is nonempty. Let $\alpha := |V(L)|\cdot p^{-n}$. Let $\cH_{k,h, L}$ be the collection of varieties generated by subspaces $S = H \oplus L$ where $H$ consists of polynomials of degree at most $k$, is $r$-high rank relative to $L$, and has dimension $h \geq 1$. For every $\eps \in (0,1)$, if $j \geq (\alpha^{-1}t_h+\eps){n \choose k}$, then for all $n$ sufficiently large in terms of all other parameters,
    $$
    \pr{M \text{ pierces } \cH_{k,h,L}} \ge 1-2\exp \bigp{-\frac{(\alpha \eps)^2}{8(t_h+\alpha \eps)}{n \choose k}},
    $$
    and if $j \leq (\alpha^{-1}t_h-\eps) {n \choose k}$ then
    $$
    \pr{M \text{ pierces } \cH_{k,h,L}} = o(1)
    $$
    as $n\to \infty$.
\end{lemma}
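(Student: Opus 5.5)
The plan is to reduce to Proposition \ref{prop:relativethresh} by conditioning on how many of the $j$ sampled points fall in $V(L)$. Every variety in $\cH_{k,h,L}$ has the form $V(H\oplus L)=V(H)\cap V(L)$, so it lies inside $V(L)$. Hence $M$ pierces $\cH_{k,h,L}$ if and only if $M\cap V(L)$ pierces $\cG_{k,h,L}$ (as varieties inside $V(L)$).

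Let $J:=|\{i\le j: v_i\in V(L)\}|$, where $v_1,\dots,v_j$ are the i.i.d.\ uniform samples from $\F_p^n$. Then $J\sim \mathrm{Bin}(j,\alpha)$. Conditional on $J=j'$, the points landing in $V(L)$ are $j'$ independent uniform samples from $V(L)$. This is exactly the random model of Proposition \ref{prop:relativethresh}. Note that $\alpha\ge p^{-k\ell}$ by Lemma \ref{lem:warning}, so $\alpha$ is bounded below independently of $n$, and Chernoff bounds give concentration on the scale $\binom nk$.

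\emph{Upper direction.} Take $j\ge(\alpha^{-1}t_h+\eps)\binom nk$, so $\EE J\ge (t_h+\alpha\eps)\binom nk$. By the Chernoff lower-tail bound with deviation $\tfrac{\alpha\eps}{2}\binom nk$,
\[\Pr\Big(J<(t_h+\tfrac{\alpha\eps}{2})\tbinom nk\Big)\le \exp\Big(-\frac{(\alpha\eps/2)^2\binom nk^2}{2\,\EE J}\Big).\]
Since the bound is monotone in $j$, we may assume $\EE J$ is of order $(t_h+\alpha\eps)\binom nk$, which yields $\exp\bigl(-\frac{(\alpha\eps)^2}{8(t_h+\alpha\eps)}\binom nk\bigr)$. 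For larger $j$, pass to a subsample, since piercing is monotone in the sample. On the complementary event, apply Proposition \ref{prop:relativethresh} with $\eps$ replaced by $\alpha\eps/2$. This gives failure probability at most $\exp(-\frac{\alpha\eps}{4}\log(1+(p^h-1)^{-1})\binom nk)$.

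It remains to check that this term is also at most $\exp(-\frac{(\alpha\eps)^2}{8(t_h+\alpha\eps)}\binom nk)$. Using $t_h\log(1+(p^h-1)^{-1})=h\log p$, the required inequality reduces to
\[\frac{\alpha\eps}{8(t_h+\alpha\eps)}\le\frac{h\log p}{4t_h},\]
which holds since $\alpha\eps<1\le 2h\log p$ when $\eps<1$. Summing the two bounds gives the factor $2$.

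\emph{Lower direction.} Take $j\le(\alpha^{-1}t_h-\eps)\binom nk$. By Chernoff, with high probability $J\le (t_h-\alpha\eps/2)\binom nk$. Conditioned on any such value $j'$, Proposition \ref{prop:relativethresh} gives piercing probability $o(1)$.

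The main obstacle is that this $o(1)$ must hold uniformly over all $j'$ in this range. Monotonicity of piercing in the sample handles this. By coupling, adding points only helps piercing, so it suffices to bound the single extreme value $j'=\lfloor(t_h-\alpha\eps/2)\binom nk\rfloor$.

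Beyond that, the only bookkeeping needed is to check that Proposition \ref{prop:relativethresh}'s ``$n$ sufficiently large'' condition depends only on $\eps,h,\ell,k,p$. It does, so it is uniform over $L$ of fixed dimension.
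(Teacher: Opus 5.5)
Your proposal is correct and is essentially the paper's argument: count the samples landing in $V(L)$ (a $\mathrm{Bin}(j,\alpha)$ variable with $\alpha\ge p^{-k\ell}$ by Lemma \ref{lem:warning}), control this count with Chernoff bounds at deviation $\frac{\alpha\eps}{2}\binom nk$, and apply Proposition \ref{prop:relativethresh} to the conditioned sample with $\eps$ replaced by $\alpha\eps/2$. Your extra checks supply details the paper leaves implicit: that the Proposition's failure term $\exp\bigl(-\frac{\alpha\eps}{4}\log(1+(p^h-1)^{-1})\binom nk\bigr)$ is dominated by the Chernoff term, and the coupling/monotonicity argument giving uniformity over the conditioned value of $J$ (for uniformity over $L$, note that $\alpha\eps/2$ ranges over $[p^{-k\ell}\eps/2,1/2]$, which is the content of the paper's remark $\delta\gg_{p,k,\ell,\eps}1$).
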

\begin{proof}
    Since each variety defined by an element of $\cH_{k,h,L}$ is contained in $V(L)$, we have that $M$ pierces $\cH_{k,h,L}$ if and only if $M \cap V(L)$ pierces $\cG_{k,h,L}$, as defined in Proposition \ref{prop:relativethresh}. Letting $\delta \coloneqq \alpha \eps / 2$, consider sampling at least $\alpha^{-1}(t_h + 2\delta){n \choose k}$ points, i.e., set $j \ge \alpha^{-1}(t_h + 2\delta){n \choose k}$. Recall Lemma~\ref{lem:warning} which says that $\alpha \ge p^{-\ell k} \gg_{p,k,\ell} 1$, and so $\delta \gg_{p,k,\ell,\eps} 1$. Let $X_L$ be the number of points (not necessarily distinct) of $M$ landing in $V(L)$. Then by a Chernoff bound \cite[Theorem A.1.13]{probmethod}
$$
\pr{X_L - (t_h + 2\delta){n \choose k} < -\delta{n \choose k}} \le \exp \bigp{-\frac{\delta^2}{2(t_h+2\delta)}{n \choose k}} = \exp \bigp{-\frac{(\alpha \eps)^2}{8(t_h+\alpha \eps)}{n \choose k}}.
$$
Then assuming $X_L \ge (t_h + \delta){n \choose k}$, note that each of the $X_L$ points we are sampling to make $M \cap V(L)$ are distributed uniformly and independently inside $V(L)$ with replacement. Thus we have by Proposition \ref{prop:relativethresh} (taking $j = X_L$ and recalling that $\delta \gg 1$) that the probability there exists $H$ of dimension $h$ which is $r$-high rank relative to $L$ and such that $V(H) \cap V(L) \cap M = \emptyset$ is at most
$$
    \exp \bigp{-\frac{\delta}{2}\log(1+(p^h-1)^{-1}) {n \choose k}} = \exp \bigp{-\frac{\alpha \eps \log(p^h)}{4 t_h} {n \choose k}}.
$$
Summing these two probabilities gives the first part of the result.

For the other side of the threshold, sample $\alpha^{-1}(t_h - 2\delta){n \choose k}$ points. Then again by a Chernoff bound \cite[Theorem A.1.4]{probmethod},
$$
\pr{X_L - (t_h - 2\delta){n \choose k} > \delta{n \choose k}} < \exp \bigp{-\frac{2\delta^2}{\alpha^{-1}(t_h-2\delta)}{n \choose k}} = o(1).
$$
Then assuming $X_L \le (t_h - \delta){n \choose k}$ we have by Proposition \ref{prop:relativethresh} that with high probability there exists an $H$ with $\dim(H) = h$, which is $r$-high rank relative to $L$ and which satisfies $V(H) \cap V(L) \cap M = \emptyset$. Setting $S \coloneqq H+L \cong H\oplus L$ (since $H$ is $r$-high rank relative to $L$)  we have $V(S) \cap M = \emptyset$ with high probability.
\end{proof}

Finally, applying the above and taking a union over low rank subspaces gives a threshold for all $S = H \oplus L$ with fixed $\dim (H), \dim (L)$.

\begin{lemma}[Threshold for fixed rank parameters]\label{lem:lowdimthresh}
     Let $\cI_{k,h,\ell}$ be the collection of nonempty varieties generated by subspaces $S=H \oplus L$ with $\dim(H) = h \geq 1$, $\dim(L) = \ell$,  consisting of polynomials of degree at most $k$ and such that $H$ is $r$-high rank relative to $L$, and $L$ is $r$-low rank. Then letting
     $$
     d_{h,\ell} \coloneqq p^{\ell k} \cdot t_h = p^{\ell k} \cdot \frac{\log (p^h)}{\log(1+(p^h-1)^{-1})},
     $$
we have for every $\eps >0$
\[ \pr{M \text{ pierces } \cI_{k,h,\ell}} = \begin{cases} 
      1-o(1), & j \geq (d_{h,\ell}+\eps){n \choose k} \\
      o(1), & j \leq (d_{h,\ell}-\eps) {n \choose k} 
   \end{cases},\]
   as $n \to \infty$.
\end{lemma}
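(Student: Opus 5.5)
Both directions of the threshold will come from Lemma~\ref{lem:chernoffthresh}, applied one low rank subspace $L$ at a time. For the upper bound we take a union bound over all admissible $L$. For the lower bound we pick a single extremal $L$.

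\emph{Lower bound (failure to pierce).} By Lemma~\ref{lem:warn-tight}, choose a subspace $L_0$ of dimension $\ell$ whose nonzero elements are homogeneous of degree $k$ and which satisfies $\abs{V(L_0)} = p^{n-\ell k}$. The construction makes each generator a norm form on $k$ variables, so $L_0$ is built from polynomials depending on only $O_k(1)$ variables. Each such generator therefore has analytic rank $O_{p,k}(1)\le r$ for large $n$, and $L_0$ is $r$-low rank.

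For this $L_0$ we have $\alpha = p^{-\ell k}$, hence $\alpha^{-1}t_h = d_{h,\ell}$. The second half of Lemma~\ref{lem:chernoffthresh} then says that for $j\le (d_{h,\ell}-\eps)\binom nk$, with high probability some $S=H\oplus L_0$ with $H$ $r$-high rank relative to $L_0$ and $\dim H=h$ has $V(S)\cap M=\emptyset$. Such a $V(S)$ lies in $\cI_{k,h,\ell}$; it is nonempty by Lemma~\ref{lem:reldens}, since its density in $V(L_0)$ is about $p^{-h}>0$. So $M$ fails to pierce $\cI_{k,h,\ell}$ with high probability.

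\emph{Upper bound (piercing).} Every $V(S)\in\cI_{k,h,\ell}$ is nonempty, so $V(L)\supseteq V(S)$ is nonempty, and Lemma~\ref{lem:warning} gives $\alpha_L:=\abs{V(L)}p^{-n}\ge p^{-\ell k}$. Hence $\alpha_L^{-1}t_h\le d_{h,\ell}$, and $j\ge (d_{h,\ell}+\eps)\binom nk \ge (\alpha_L^{-1}t_h+\eps)\binom nk$. Because $\alpha_L\ge p^{-\ell k}$, the failure bound from the first half of Lemma~\ref{lem:chernoffthresh} is uniformly at most $2\exp(-c\binom nk)$ for some $c=c(p,k,h,\ell,\eps)>0$.

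Next we union bound over all $r$-low rank $L$ of dimension $\ell$. Each such $L$ is spanned by $\ell$ polynomials of analytic rank at most $r\ll \log n$. By Lemma~\ref{lem:lowrankcount} there are at most $p^{O(\ell n^{k-1}\log^2 n)}=\exp(o(\binom nk))$ choices. The total failure probability is therefore $\exp(o(\binom nk)-c\binom nk)=o(1)$.

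The main obstacle is uniformity in this union bound: we need the "$n$ sufficiently large" requirement in Lemma~\ref{lem:chernoffthresh}, and in Proposition~\ref{prop:relativethresh} beneath it, to depend only on $p,k,h,\ell,\eps$ and not on the particular $L$. Checking the proof of Proposition~\ref{prop:relativethresh}, the error terms there are controlled only through $\ell$, $h$, $k$, $p$, via Chevalley--Warning and the relative density lemma. So the $o(1)$ terms are uniform over $L$, and the argument goes through.
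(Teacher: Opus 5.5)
Your proposal is correct and follows the paper's proof essentially verbatim. For failure to pierce, you take the extremal $L$ from Lemma~\ref{lem:warn-tight}, so that $\alpha^{-1}t_h = d_{h,\ell}$, and apply the second half of Lemma~\ref{lem:chernoffthresh}. For piercing, you write the collection as a union of the $\mathcal{H}_{k,h,L}$, use $\alpha_L \ge p^{-\ell k}$ from Chevalley--Warning, and union bound over the $p^{o(n^k)}$ low rank $L$. The extra points you verify are details the paper leaves implicit, and your justifications are sound:
\begin{itemize}
\item the norm-form $L_0$ is $r$-low rank, since each generator depends on only $k$ variables and so has analytic rank at most $k$;
\item the resulting varieties are nonempty, by Lemma~\ref{lem:reldens};
\item the error bounds and the ``$n$ sufficiently large'' requirement are uniform in $L$, because they depend on $L$ only through $\ell$ and the lower bound on $\alpha_L$.
\end{itemize}
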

\begin{proof}
    To show the statement about failing to pierce $\cI_{k,h,\ell}$, we use Lemma \ref{lem:warn-tight}, which says that the Chevalley--Warning theorem is tight. This yields $L$ with $\dim(L) = \ell$ such that $\abs{V(L)} = p^{-k \ell}p^n$; we note that this $L$ is $r$-low rank. Thus if
    $$
    j \leq (d_{h,\ell}-\eps) {n \choose k} = (p^{k \ell}t_h-\eps) {n \choose k},
    $$
    then by Lemma \ref{lem:chernoffthresh} we have that $\cH_{k,h,L}$ is not pierced with high probability. But  $\cH_{k,h,L} \subseteq \cI_{k,h,\ell}$, so $\cI_{k,h,\ell}$ is also not pierced.

    For the other direction we have
    $$
    \cI_{k,h,\ell} = \bigcup_{\substack{L: \dim(L) = \ell \\ L \text{ is $r$-low rank}, V(L)\ne \emptyset}} \cH_{k,h,L},
    $$
    so to pierce $\cI_{k,h,\ell}$ it suffices to pierce each such $\cH_{k,h,L}$. Consider any such $L$ in the union above. Then $\abs{V(L)} = \alpha_L p^n \ge p^{-\ell k}p^n$ by Lemma \ref{lem:warning}. Thus if
    $$
    j \geq (d_{h,\ell}+\eps) {n \choose k} \ge (\alpha_L^{-1}t_h+\eps) {n \choose k},
    $$
    then by Lemma \ref{lem:chernoffthresh} we have that the probability that $\cH_{k,h,L}$ is not pierced is at most $2\exp \bigp{-\frac{(\alpha_L \eps)^2}{8(t_h+\alpha_L \eps)}{n \choose k}}$. Taking the union bound over the $p^{o(n^k)}$ $r$-low rank $L$ of dimension $\ell$ (cf. Lemma \ref{lem:lowrankcount}) gives that the probability that $\cI_{k,h,\ell}$ is not pierced is at most \[p^{o(n^k)} \cdot 2\exp \bigp{-\frac{(\alpha_L \eps)^2}{8(t_h+\alpha_L \eps)}{n \choose k}} = o(1).\]
    This completes the proof.
\end{proof}

\subsection{Proof of Theorem \ref{thm:mainthresh}}\label{ss:proof-thm}
Now we complete the proof of Theorem \ref{thm:mainthresh}. Here we will take $M$ to be the fixed size random subset in the statement of the theorem, and $M'$ the union of $j$ elements taken uniformly and independently with replacement. Thus the previously established lemmas and proposition apply to $M'$; we will first show the result for $M'$ and later transfer to $M$. 

We will need an analytic lemma which will determine the bottleneck for the threshold result of Theorem \ref{thm:mainthresh}. Recall in the following that 
\[ d_{h,\ell} := p^{\ell k} \cdot \frac{\log (p^h)}{\log(1+(p^h-1)^{-1})} = p^{\ell k} \cdot \frac{\log (p^h)}{-\log(1-p^{-h})}.\]

\begin{lemma}\label{lem:dhl-sizes}
        Let $s, h \geq 1$, $\ell \geq 0$, $k\geq 2$ be integers such that $s=h+\ell$. Then, for fixed $s$, $d_{h,\ell}$ is decreasing in $h$. That is, for all $1\leq h \leq s-1$,
        \[d_{h+1,s-(h+1)} \leq d_{h, s-h}.\]
\end{lemma}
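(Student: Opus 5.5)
The plan is to compare the two quantities through their ratio, using only elementary bounds on the logarithm. Set $\ell := s-h\geq 1$ and $x := p^{-h}$, so that $x\in(0,1/3]$ because $p\geq 3$ and $h\geq 1$. The powers of $\log p$ cancel, and the ratio is
\[
\frac{d_{h+1,\ell-1}}{d_{h,\ell}} = p^{-k}\cdot\frac{h+1}{h}\cdot\frac{-\log(1-x)}{-\log(1-x/p)}.
\]
It therefore suffices to show that the product of the last two factors is at most $p^k$.

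I will bound the factors one at a time.
\begin{itemize}
\item The first factor is trivial: $(h+1)/h\leq 2$.
\item For the numerator of the logarithmic factor, I will use $-\log(1-x)\leq x/(1-x)$. This follows from the series expansion, or from $\log(1+u)\leq u$ applied with $u = x/(1-x)$. Since $x\leq 1/3$, this gives $-\log(1-x)\leq \tfrac32 x$.
\item For the denominator, I will use $-\log(1-y)\geq y$ with $y = x/p$.
\end{itemize}
Together these give
\[
\frac{-\log(1-x)}{-\log(1-x/p)}\leq \frac{\tfrac32 x}{x/p} = \tfrac32 p.
\]

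Combining the bounds, the product of the last two factors is at most $2\cdot\tfrac32 p = 3p$. Since $p\geq 3$ we have $3p\leq p^2$, and since $k\geq 2$ we have $p^2\leq p^k$. This yields $d_{h+1,s-(h+1)}\leq d_{h,s-h}$, and hence $d_{h,s-h}$ is decreasing in $h$ for fixed $s$.

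I do not expect a real obstacle. The only point needing care is that the crude estimates leave enough room, and they do because there is a factor $p^{k}\geq p^2$ to spend against a loss of only $3p$. If one wanted a sharper statement, for example strict inequality, the same computation gives it, since $-\log(1-y)>y$ for $y>0$.
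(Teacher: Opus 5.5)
Your proof is correct and is essentially the paper's argument. You form the same ratio and bound it with the same two inequalities, $-\log(1-x)\leq x/(1-x)$ in the numerator and $-\log(1-y)\geq y$ in the denominator; only the constant bookkeeping differs (you use $x\leq 1/3$ and $(h+1)/h\leq 2$ to reach $3p\leq p^k$, whereas the paper bounds $\frac{p}{p^k(1-p^{-h})}\leq\frac{1}{p-1}$ and then $\frac{h+1}{h(p-1)}\leq 1$).
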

\begin{proof}
    Observing the bounds 
    \[ x \leq -\log(1-x) \leq \frac{x}{1-x}, \qquad x\in [0,1),\]
    we can bound the ratio
    \[\frac{d_{h+1,s-(h+1)}}{d_{h, s-h}} = \frac{h+1}{hp^k} \cdot \frac{-\log(1-p^{-h})}{-\log(1-p^{-(h+1)})} \leq \frac{h+1}{hp^k} \cdot \frac{p}{1-p^{-h}} \leq \frac{h+1}{h(p-1)} \leq 1,\]
    where we have used that $k\geq 2$, $p\geq 3$, $h\geq 1$.
\end{proof}

\begin{proof}[Proof of Theorem \ref{thm:mainthresh}] Let $\{f_1,\dots,f_s\}\subset \F_p[x_1,\ldots, x_n]_{\leq k}$ be a set of $s$ polynomials (we may insert zero polynomials to ensure the set is of size $s$ without changing the corresponding variety) such that $V(f_1,\ldots, f_s)$ is nonempty, and let $S = \subspan(f_1,\dots,f_s)$. Let $\wt s:= \dim(S) \le s$ and $V(S) = V(f_1,\dots,f_s)$. In this way it is equivalent to consider the threshold for piercing all subspaces of dimension at most $s$, consisting of polynomials of degree at most $k$. In a slight abuse of notation we will redefine $\cF_{k,s}$ to denote this collection, noting that it possesses the same piercing threshold as the original definition. For each such $S$ we  decompose, via Lemma \ref{l:decomp}, $S = H\oplus L$ where $L$ is $r$-low rank (with $r = k2^{k+1}\log_p n$) and $H$ is $r$-high rank relative to $L$. Let $\ell = \dim(L), h = \dim(H)$ and $t_h, d_{h,\ell}$ be as defined in Proposition \ref{prop:relativethresh} and Lemma \ref{lem:lowdimthresh} respectively. Recall from the statement of Theorem \ref{thm:mainthresh} the notation $c_{k,s} = p^{(s-1)k}t_1 = d_{1,s-1}$. Thus we aim to show that for any $\eps > 0$
$$
\pr{M' \text{ pierces } \cF_{k,s}} =  \begin{cases} 
      1-o(1), & j \geq (d_{1,s-1}+\eps){n \choose k} \\
      o(1), & j \leq (d_{1,s-1}-\eps) {n \choose k} 
   \end{cases}.
$$
The latter of the two statements is immediate from Lemma \ref{lem:lowdimthresh}: for $j \le (d_{1,s-1} - \eps){n \choose k}$ we have that $\cI_{k,1,s-1}$ is not pierced with high probability, so $\cF_{k,s} \supseteq \cI_{k,1,s-1}$ is also not pierced.

For the former statement we have 
$$\cF_{k,s} = \bigcup_{\wt s = 0}^{s} \bigcup_{\ell = 0}^{\wt s} \cI_{k,\wt s-\ell,\ell},
$$
    so to pierce $\cF_{k,s}$ it suffices to pierce each such $\cI_{k,\wt s -\ell,\ell}$. The case $\ell = \wt s$ is not covered by our previous lemmas so we now argue this case directly. 
    By our bound on the number of 
    $r$-low rank polynomials Lemma \ref{lem:lowrankcount} and by choosing a basis for $S$, we observe that 
    \[\left | \cI_{k,0,\wt s} \right| = p^{o(n^k)}.\] Thus by the Chevalley--Warning theorem (Lemma \ref{lem:warning}) we have
\begin{align*}
    \sum_{S: \dim(L) = \dim(S)} (1-\PP(v \in V(S)))^j \le
    p^{o(n^k)}(1-p^{-sk})^j = o(1),
\end{align*}
since $j \gg n^k$. That is, the expected number of $S$ with $\ell = \wt s$ and with $V(S) \cap M' = \emptyset$ is $o(1)$, and so we may infer the corresponding probability statement in this case by Markov's inequality. If $\ell < \wt s$, then by the analysis in Lemma \ref{lem:dhl-sizes},
    \begin{equation}\label{eq:maxthresh}
        j \geq (d_{1,s-1}+\eps) {n \choose k} \ge (d_{s-\ell,\ell}+\eps) {n \choose k} \ge (d_{\wt s-\ell, \ell}+\eps) {n \choose k},
    \end{equation}
    and so by the fixed rank parameter threshold result (Lemma \ref{lem:lowdimthresh}) we have that the probability $\cI_{k,\wt s -\ell, \ell}$ is not pierced is $o(1)$. Taking the union bound over the $<s^2$ choices of $\wt s,\ell$ gives that the probability $\cF_{k,s}$ is not pierced is $ o(1)$, completing the proof of the former statement.

    Finally we deduce the result for a random $M$ of fixed size from the result for $M'$ sampled via choosing $j$ elements uniformly with replacement. This follows simply by noting that when $j = O(n^k)$, we have that the probability of a replaced element being reselected in $M'$ is $o(1)$. Thus the corresponding statements for $M$ holds in the regime $|M|=O(n^k)$, and then one may of course use that the property of piercing is monotonic in $|M|$. 
    \end{proof}

\section{Proof of random Szemer\'{e}di bound}\label{sec:rszproof}

In this section, we prove Theorem \ref{thm:rszbound}, as well as subsequently providing an observation about its optimality in Proposition \ref{prop:densityupper}.
In the proof of Theorem \ref{thm:rszbound} we will need the following bound which comes out of the above analysis.

\begin{proposition}[Random Szemer\'{e}di construction]\label{p:rszvariety}
    Let $p > k \ge 2, t\geq 1$ be fixed. Let $\eps > 0$. If $M \subseteq \FF_p^n$ is chosen uniformly at random of size
    $$
    \abs{M} \le \bigp{\frac{\log (p^t)}{\log(1+(p^t-1)^{-1})}-\eps}{n \choose k},
    $$
    then with high probability there is a variety $A \subseteq \FF_p^n$ determined by the vanishing of homogeneous polynomials of degree $k$ such that $A \cap M = \emptyset$, and with $\abs{A} \ge (p^{-t}-o(1))  p^n$.
\end{proposition}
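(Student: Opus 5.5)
We will obtain this as a direct consequence of Proposition \ref{prop:relativethresh} with $L=\{0\}$ and $h=t$, in its strengthened form from Remark \ref{rem:homog}. We then combine it with the relative density estimate, Lemma \ref{lem:reldens}, to control $\abs{A}$.

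The first step is to pass to the model with replacement. Let $j:=\abs{M}\le (t_t-\eps)\binom{n}{k}$, where $t_t=\frac{\log(p^t)}{\log(1+(p^t-1)^{-1})}$. Since $j=O(n^k)$, the probability that sampling $j$ points with replacement produces a repeat is $o(1)$. Conditioned on having no repeats, the set obtained is uniform among sets of size $j$. It therefore suffices to prove the statement for $M'$ formed by $j$ independent uniform samples from $\F_p^n=V(\{0\})$. This is the same transfer used at the end of the proof of Theorem \ref{thm:mainthresh}.

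Next, apply Proposition \ref{prop:relativethresh} with $L=\{0\}$, so $\ell=0$, and with $h=t$. The subspace $\{0\}$ is trivially $r$-low rank and $V(\{0\})=\F_p^n$ is nonempty, so the hypotheses hold. By the second half of the proposition, with high probability $M'$ fails to pierce $\cG_{k,t,\{0\}}$. By Remark \ref{rem:homog}, the witness can be taken to be $H$ of dimension $t$ spanned by homogeneous polynomials of degree exactly $k$, with $H$ $r$-high rank relative to $\{0\}$, i.e.\ $\ark(f)>r$ for every nonzero $f\in H$. The second-moment argument was carried out with $X_j$ counting only such homogeneous $H$, so this strengthening holds. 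Set $A:=V(H)$. Then $A\cap M'=\emptyset$, and $A$ is cut out by homogeneous polynomials of degree $k$.

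It remains to bound $\abs{A}$ from below. Apply Lemma \ref{lem:reldens} with $L=\{0\}$ and $\abs{V(L)}=p^n$:
\[\abs{\frac{\abs{V(H)}}{p^n}-p^{-t}}\le p^{-r/2^{k-1}}=p^{-2k\log_p n}=n^{-2k}=o(1),\]
using $r=k2^{k+1}\log_p n$. Hence $\abs{A}\ge (p^{-t}-o(1))p^n$.

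The only delicate point is justifying Remark \ref{rem:homog}, namely that restricting the family to homogeneous degree-$k$ polynomials does not change the counts used in the second-moment argument. There are still $p^{(t-o(1))\binom{n}{k}}$ subspaces of dimension $t$ spanned by homogeneous degree-$k$ forms, since there are $\binom{n+k-1}{k}=(1+o(1))\binom{n}{k}$ degree-$k$ monomials. The upper bounds on the low-rank and error-term counts in the proof only shrink under this restriction. So the lower bound \eqref{eq:ex-lower} and the ratio bound \eqref{eq:ratio} go through unchanged.
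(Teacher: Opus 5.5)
Your proposal is correct and follows essentially the same route as the paper: apply Proposition \ref{prop:relativethresh} with $L=\{0\}$ and $h=t$ (in the homogeneous form of Remark \ref{rem:homog}), get the density bound from Lemma \ref{lem:reldens}, and transfer from the with-replacement model to the fixed-size model. The only differences are minor. You do the transfer by conditioning on no repeats, whereas the paper samples $(t_t-\varepsilon/2)\binom{n}{k}$ points and defers to the transfer argument from Theorem \ref{thm:mainthresh}. Also, $r/2^{k-1}=4k\log_p n$, so the density error is $n^{-4k}$ rather than $n^{-2k}$; this slip is harmless and only helps.
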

\begin{proof}
    Let $r = k2^{k+1}\log_p n$, as we have had in the previous section. Let $\cF_t$ be the collection of varieties $V(H)$ generated by subspaces $H$ with $\dim(H) = t$ consisting of homogeneous polynomials of degree $k$ and such that $H$ is $r$-high rank relative to $L=\{0\}$, that is, $\min_{0 \neq f \in H} \ark(f) > r$. Invoking Lemma \ref{lem:reldens} with $L = \set{0}$ and $h=t$, each such variety then has density at least $p^{-t} - o(1)$. Proposition \ref{prop:relativethresh} with $L = \set{0}$ says that if $M'$ is formed by sampling
    \[\bigp{\frac{\log (p^t)}{\log(1+(p^t-1)^{-1})}-\eps/2}{n \choose k}\]
    elements of $\F_p^n$ uniformly and independently with replacement, then asymptotically almost surely $\cF_t$ is not pierced by $M'$ (recall that the threshold is the same when requiring homogeneity; cf. Remark \ref{rem:homog}). This implies the proposition by an argument much the same as in Theorem \ref{thm:mainthresh}. 
\end{proof}

We are now  ready to recall and prove the following. Note that the meaning of $k$ changes in the following statement: it now refers to the length of the arithmetic progression, which is one more than the degree of the corresponding varieties.

\rsz* 
\begin{proof}
Denoting the $k$-fold derivative $\Delta_d\cdots \Delta_d$ by $\Delta_d^k$, an induction on $k$ reveals that if $f: \F_p^n \to \F_p$ then, 
\[\Delta_d^k f(x) = \sum_{j=0}^k (-1)^{k-j} {k \choose j} f(x+jd).\]
If furthermore 
$f \in \F_p[x_1,\ldots,x_n]$ has degree $k$, then $\Delta_d^k f(x)$ is constant and so 
\[\sum_{j=0}^k (-1)^{k-j} {k \choose j} f(x+jd) = \Delta_d^k f(x) = \Delta_d^k f(0) = \sum_{j=0}^k (-1)^{k-j} {k \choose j} f(jd).\]
If $f$ is \textit{homogeneous}, then this gives
\[ \sum_{j=0}^k (-1)^{k-j} {k \choose j} f(x+jd) = \left(\sum_{j=0}^k (-1)^{k-j} {k \choose j} j^k \right)f(d) = k! f(d).\]
Therefore, for a homogeneous degree $k$ polynomial $f$, if $x,x+d,\dots,x+kd \in V(f)$ then also $d \in V(f)$ (using that $k! \not \equiv 0 \pmod p$). Iterating this for multiple polynomials yields the observation that, setting $A:= V(f_1,\ldots, f_s)$ for homogeneous degree $k$ polynomials $f_i$, if $A$ contains a $(k+1)$-AP with common difference $d$, then $d\in A$. 

Thus, to find a subset of $\FF_p^n$ with no $(k+1)$-AP with common difference in $M$ it suffices to find a variety generated by homogeneous degree $k$ polynomials which is disjoint from $M$. This occurs with high probability if $\abs{M}$ is below the piercing threshold for the collection of $\alpha$-dense varieties which are generated by homogeneous degree $k$ polynomials. Therefore, the theorem now follows from Proposition \ref{p:rszvariety} (and replacing $k$ by $k-1$). 
\end{proof}

We conclude by noting in the upcoming proposition that if $\abs{M} = \omega(n^k)$ random differences are sampled then with high probability every dense variety has nonempty intersection with $M$. For brevity we sample $M$ with replacement, but one may of course recover the analogous statement for random $M$ of fixed size. 

\begin{proposition}\label{prop:densityupper}
    Let $p > k \ge 3$ and $\alpha \in (0,1)$ be fixed. If $M \subseteq \FF_p^n$ is chosen by sampling $|M|$ elements of $\F_p^n$ at random with replacement, then with  probability $1-o_{\frac{|M|}{n^k}\to \infty}(1)$, it holds that $A \cap M \neq \emptyset$ for every variety $A \subseteq \FF_p^n$ determined by the vanishing of polynomials of degree at most k such that $\abs{A} \ge \alpha p^n$.
\end{proposition}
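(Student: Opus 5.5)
The plan is a first moment argument over the dense varieties, reduced to varieties cut out by few polynomials so that only $p^{O(n^k)}$ of them need to be counted.

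\textbf{Reduction to a bounded number of polynomials.} Let $A = V(f_1,\dots,f_m)$ with $\deg f_i \le k$ and $\abs{A} \ge \alpha p^n$, and put $S = \subspan(f_1,\dots,f_m)$, so that $A = V(S)$. I would first argue that $A$ contains a variety $V(S')$ with $S' \le S$, $\dim(S')$ bounded in terms of $p,k,\alpha$, and $\abs{V(S')}$ still at least a constant multiple of $p^n$. Build a chain $S_0=\{0\} \le S_1 \le \cdots$ inside $S$ by adding one polynomial of $S$ at a time. At each step, if there is some $g \in S$ with $\abs{V(S_i+\langle g\rangle)} \le (1-\eta)\abs{V(S_i)}$ for a small fixed $\eta=\eta(p,k,\alpha)>0$, adjoin such a $g$. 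Since each step shrinks the variety by a factor $(1-\eta)$ and the variety always contains $A$, which has density at least $\alpha$, this process stops after $O_{\eta,\alpha}(1)$ steps.

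\textbf{Handling the terminal stage.} When the process stops at $S^*$, every $g \in S$ vanishes on at least a $(1-\eta)$ fraction of $V(S^*)$. Since $V(S) = \bigcap_{g\in S} V(g)$, it is not enough to know that each $g$ vanishes on most of $V(S^*)$; one needs a statement that holds for all of $S$ simultaneously. Rather than pin down $A$ itself, I would change what has to be pierced. It suffices to show that $M \cap V(S^*)$ contains a point of $A$. Here $V(S^*)$ has bounded codimension and is one of $p^{O(n^k)}$ possibilities. Conditionally on $M$ having many points in $V(S^*)$, those points are uniform on $V(S^*)$, and $A$ occupies a proportion of at least $\alpha$ of $V(S^*)$. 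The catch is that $A$ ranges over a possibly huge family of subvarieties of $V(S^*)$, so a union bound over all such $A$ is not available. This is the main obstacle.

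\textbf{Simpler route via a direct count.} I now think the cleaner approach avoids the terminal-stage analysis. Every variety of degree $\le k$ is $V(S)$ for a subspace $S$ of $\F_p[x_1,\dots,x_n]_{\le k}$, so there are at most $p^{O(\binom{n+k}{k}^2)}$ of them, which is too many. Instead I would show that a dense variety $A$ contains a variety $V(S')$ with $\dim(S') \le C(p,k,\alpha)$ and $\abs{V(S')} \ge \alpha' p^n$. I would try the following: write $A = V(S)$ and take $S' \le S$ of bounded dimension with $\abs{V(S')}$ minimal among bounded-dimension subspaces of $S$. One would then need to show that $V(S') = V(S)$ up to the structure of the problem. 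I expect this to be false in general, so I would not rely on it as the main argument.

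\textbf{Main argument.} The most robust plan, and the one I would commit to, uses only the cardinality of the family of varieties, bounding it through the functions that determine them. A subset of $\F_p^n$ is determined by its indicator function, and $1_{V(S)} = \prod_i (1 - f_i^{p-1})$ is a polynomial. The number of distinct varieties is at most the number of distinct subspaces $S$, modulo the identification $V(S)=V(S'')$. Here I would invoke the fact that $V(S)$ equals $V(S'')$ for some $S'' \le S$ with $\dim(S'') \le \binom{n+k}{k}$; this gives no bound on the dimension in terms of $p,k,\alpha$, so it does not solve the problem. The fallback is therefore to carry out the chain argument above, proving that the terminal stage of the chain already satisfies $V(S^*) \approx A$ in density, and then to apply a union bound over the $p^{O_{p,k,\alpha}(n^k)}$ choices of $S^*$. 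This works provided $|M|/n^k \to \infty$: the expected number of such $V(S^*)$ missed by $M$ is at most
\[p^{O_{p,k,\alpha}(n^k)}\,(1-\alpha)^{|M|} = o(1).\]
The key difficulty is upgrading the terminal-stage conclusion, that each single $g \in S$ vanishes on most of $V(S^*)$, to the statement that $M$ meets $A$. For this step I would try a Chevalley--Warning-type dichotomy (Lemma \ref{lem:warning}) applied to $V(S^*) \cap V(g)$, with $\eta < p^{-k}$. By Lemma \ref{lem:warning}, a nonempty restriction of a single polynomial can cut the density by at most a factor $p^{k}$ relative to $V(S^*)$. With this gap, the aim is to force $V(S^* + \langle g\rangle) = V(S^*)$ for every $g\in S$, which would give $A = V(S^*)$.
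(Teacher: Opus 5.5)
\textbf{There is a genuine gap.} Your argument needs the terminal stage of the chain to satisfy $V(S^*+\langle g\rangle)=V(S^*)$ for every $g\in S$. That means $A=V(S^*)$ with $\dim S^*=O_{p,k,\alpha}(1)$, so that a union bound over $p^{O(n^k)}$ choices of $S^*$ applies. This statement is false, so no choice of $\eta$ can rescue it.

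Take $A=\{x_1=0\}\cup\{x_2=\cdots=x_n=0\}=V(x_1x_2,\dots,x_1x_n)$, which has density greater than $1/p$. Since $k<p$, every polynomial of degree at most $k$ vanishing on $\{x_1=0\}$ has the form $x_1r$ with $\deg r\le k-1$. So if $A=V(S)$ with $\dim S=d$, then $S$ is spanned by some $x_1r_1,\dots,x_1r_d$, and $V(r_1,\dots,r_d,x_1-1)=A\cap\{x_1=1\}=\{e_1\}$. Lemma~\ref{lem:warning} then forces $1\ge p^{n-(d+1)k}$, i.e.\ $d\ge n/k-1$. On this example your chain halts after $O_p(\log(1/\eta))$ steps with $V(S^*)$ strictly larger than $A$; each remaining $g$ removes only a small but positive fraction of $V(S^*)$.

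The Chevalley--Warning step also does not do what you want. Lemma~\ref{lem:warning} gives only the absolute bound $|V(S^*+\langle g\rangle)|\ge p^{n-(\dim S^*+1)k}$, not a loss factor of at most $p^{k}$ relative to $|V(S^*)|$. In any case a lower bound on the vanishing set is the wrong direction: your dichotomy needs a lower bound on the non-vanishing set $V(S^*)\setminus V(g)$. Applying the lemma to $g-c$ with $c\ne0$ does give $|V(S^*)\setminus V(g)|\ge p^{n-(\dim S^*+1)k}$ when this set is nonempty. But using that requires $\eta<p^{-(\dim S^*+1)k}$, while $\dim S^*$ may be as large as about $\eta^{-1}\log(1/\alpha)$, so the argument is circular.

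\textbf{What is missing.} One should not expect only $p^{O(n^k)}$ dense varieties. The paper bounds their number by $p^{o(m)}$ with $m=|M|$, which is exactly why the hypothesis is $m/n^k\to\infty$ rather than $m\ge Cn^k$. The steps are as follows.

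Set $r=(m/n^k)^{1/2}$, write $A=V(S)$ with $\dim S\le n$ (by Bara\'nczuk's theorem \cite{baranczuk}), and split $S=H\oplus L$ by Lemma~\ref{l:decomp}.

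Since $H$ is $r$-high rank relative to $\{0\}$ and $H\le S$, Lemma~\ref{lem:reldens} gives $\alpha\le |V(S)|p^{-n}\le |V(H)|p^{-n}\le p^{-h}+p^{-r/2^{k-1}}$. Hence $h=O(1)$ and there are $p^{O(n^k)}$ choices for $H$.

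The low rank part $L$ can have dimension of order $n$; in the example above, $S=L$. However, $L$ has a basis of at most $n$ polynomials of analytic rank at most $r$, so Lemma~\ref{lem:lowrankcount} gives $p^{O(n^k r\log r)}=p^{o(m)}$ choices.

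Markov's inequality then finishes: the expected number of dense varieties missed by $M$ is at most $p^{o(m)}(1-\alpha)^m\to0$.

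Your first-moment framework is the right one. What it lacks is this analytic-rank count, which handles the unboundedly many low rank equations instead of trying to eliminate them.
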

\begin{proof}
    Let $m \coloneqq \abs{M}$ and let  $r = (m/n^k)^{1/2}$. In this proof, asymptotic notation should be understood to be with respect to the limit $\frac{m}{n^k} \to \infty$ (note that since $p$ is fixed, this implies that $n\to \infty$). Implicit constants are allowed to depend on $p,k,\alpha$ without denotation.
    
    First we argue in this paragraph that the number of varieties $A$ as described in the proposition statement is at most $p^{o(m)}$. For any such $A$ consider $S \le \FF_p[x_1,\dots,x_n]_{\le k}$ such that $V(S) = A$. By \cite{baranczuk}*{Theorem 1} we may assume $\dim(S) \le n$.    Using Lemma \ref{l:decomp}, decompose $S = H \oplus L$ and let $h \coloneqq \dim(H)$. To bound the number of $V(S)$ we will bound the number of choices for $H$ and $L$. Since $H$ is $r$-high rank relative to $\set{0}$, by Lemma \ref{lem:reldens}
    $$
    p^{-h} \ge \frac{\abs{V(H)}}{p^n} - o(1) \ge \frac{\abs{V(S)}}{p^n} - o(1) \ge \alpha - o(1).
    $$
    In particular $h = O(1)$, so the number of choices for bases of $H$, and thus the number of choices for $H$,  is $p^{O(n^k)}$. Turning now to $L$, since $\dim(L) \le \dim(S) \le n$ and $L$ is $r$-low rank we may find a basis of $L$ consisting of at most $n$ polynomials of analytic rank at most $r$. Thus by Lemma \ref{lem:lowrankcount} there are $p^{O(n^k r \log_p r)}$ many choices of this basis, and hence at most that many choices for $L$. Putting the two counts together and recalling our choice of $r$, there are at most $
    p^{O(n^k r \log_p r)} \cdot p^{O(n^k)} = p^{o(m)}$
    choices for the variety $A$.
    
    The result then follows from Markov's inequality. Indeed, the expected number of $A$ such that $A \cap M = \emptyset$ is at most $p^{o(m)} (1-\alpha)^m \to 0$, and thus with probability $1-o_{\frac{m}{n^k} \to \infty}(1)$ there is no such $A$.
\end{proof}

\bibliographystyle{amsplain}
\bibliography{rsz.bib}

\end{document}